\documentclass{amsart}

\usepackage[all,cmtip]{xy}
\usepackage{todonotes}
\usepackage{comment}
\usepackage{enumitem}
\usepackage{graphicx}
\usepackage{tikz-cd}
\tikzset{commutative diagrams/row sep/normal = 7.5 ex}
\tikzset{commutative diagrams/column sep/normal = 8.5 ex}
\usepackage{multicol}
\usepackage{color}
\usepackage[colorlinks=true,linkcolor=blue, urlcolor=blue, citecolor=blue]{hyperref}
\usepackage[utf8]{inputenc} 

\usepackage{amsmath,amssymb,amsthm,bbm}


\usepackage{array, multirow} 

\newtheorem{theorem}{Theorem}[section]
\newtheorem{lemma}[theorem]{Lemma}

\newtheorem{proposition}[theorem]{Proposition}
\newtheorem{question}[theorem]{Question}

\newtheorem{claimcounted}{Claim}

\theoremstyle{definition}
\newtheorem{definition}[theorem]{Definition}

\newtheorem{remark}[theorem]{Remark}

\newcommand {\NN} {{\mathbb N}} 

\newcommand {\caE} {{\mathcal E}}
\newcommand {\caF} {{\mathcal F}}
\newcommand {\caG} {{\mathcal G}}

\newcommand {\caO} {{\mathcal O}}

\newcommand{\PR}{\mathbb{P}}

\newcommand{\cat}[1]{{\normalfont\textbf{#1}}}

\DeclareMathOperator {\Mor} {{{Mor}}}
\DeclareMathOperator{\MMor}{\mathcal{M}\!{\it or}}
\DeclareMathOperator{\QQuot}{\mathcal{Q}\!{\it uot}}

\DeclareMathOperator{\Hilb}{Hilb}
\DeclareMathOperator{\HHilb}{\mathcal{H}\!{\it ilb}}
\DeclareMathOperator {\Spec} {{{Spec}}}

\DeclareMathOperator {\Hom} {{{Hom}}}

\DeclareMathOperator {\id} {{{id}}}
\DeclareMathOperator {\Bl} {{{Bl}}}

\DeclareMathOperator {\pr} {{pr}}

\newcommand*{\defeq}{\mathrel{\vcenter{\baselineskip0.5ex \lineskiplimit0pt
			\hbox{\scriptsize.}\hbox{\scriptsize.}}}%
	=}


\newcommand{\mathand}{\;\;\hbox{and}\;\;}
\newcommand{\mathfor}{\;\;\hbox{for}\;\;}


\newenvironment{subproof}{\begin{proof}[Proof of claim.]}{%
\end{proof}}

\usepackage[style=alphabetic, minalphanames=3]{biblatex}

\AtEveryBibitem{%
	\clearfield{issn} 
	\clearfield{doi} 
	\clearfield{isbn} 
	
	\ifentrytype{online}{}{
		\clearfield{url}
	}
}

\addbibresource{Properties_bib.bib}

\begin{document}

\title{Properties of schemes of morphisms and applications to blow-ups}

\author{Lucas das Dores}
\address{
	IMPA, Estrada Dona Castorina 110, 22460-320, Rio de Janeiro, Brazil
	\textsc{\newline \indent 
		\href{https://orcid.org/0000-0002-8713-5308%
		}{\includegraphics[width=1em,height=1em]{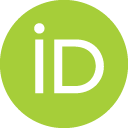} {\normalfont https://orcid.org/0000-0002-8713-5308}}
		}}
\email{lucas.dores@impa.br}
\subjclass[2010]{Primary 14D22. Secondary 14H10.}

\date{}

\begin{abstract}
	Let $X$ be a fixed projective scheme which is flat over a base scheme $S$. The association taking a quasi-projective $S$-scheme $Y$ to the scheme parametrizing $S$-morphisms from $X$ to $Y$ is functorial. We prove that this functor preserves limits, and both open and closed immersions. As an application, we determine a partition of schemes parametrizing rational curves on the blow-ups of projective spaces at finitely many points. We compute the dimensions of its components containing rational curves outside the exceptional divisor and the ones strictly contained in it. Furthermore, we provide an upper bound for the dimension of the irreducible components intersecting the exceptional divisors properly.
\end{abstract}

\maketitle

\section{Introduction}
\label{aaa}
Let $Y$ be a quasi-projective variety over an algebraically closed field $k$, and let $\Mor(\PR^1, Y)$ be the scheme parametrizing morphisms from $\PR^1$ to $Y$. Suppose that $f:Y' \rightarrow Y$ is a morphism over $k$. Then, there is a natural induced morphism $f_M: \Mor(\PR^1,Y') \rightarrow \Mor(\PR^1, Y)$ given by composition. We investigate a natural question, asked by V. Guletski\u{\i}:

\begin{question}
	\label{aab}
	Which properties of the induced morphism $f_M$ can be deduced from the properties of the morphism $f$?
\end{question}

An answer to this question yields valuable tools to compare spaces parametrizing rational curves on varieties. 

We provide a partial answer in a more general setting. Before stating it, we point out that we use the same notion of projectivity as in \cite{hartshorne-AG}. Namely, an $S$-scheme $X$ is \emph{projective} if there is a closed immersion $X \hookrightarrow \PR^n_S$ over $S$ for some $n \ge 0$ and it is \emph{quasi-projective} if it factors into an open immersion $X \hookrightarrow X'$ with $X'$ being a projective $S$-scheme. With this terminology we prove the following theorem.

\begin{theorem}
	\label{aad}
	Let $S$ be a Noetherian scheme and $X$ be a projective $S$-scheme which is flat over $S$. Let $\cat{Sch}/S$ and $\cat{QProj}/S$ denote the categories of $S$-schemes and quasi-projective $S$-schemes respectively. Then, there is a well defined functor
	\[
	\Mor_S(X, -): \cat{QProj}/S \rightarrow \cat{Sch}/S
	\] 
	associating each quasi-projective $S$-scheme $Y$ to the scheme $\Mor_S(X,Y)$ parametrizing $S$-morphisms from $X$ to $Y$. This functor preserves limits, open and closed immersions.
\end{theorem}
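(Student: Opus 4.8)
The plan is to establish the theorem by first constructing the functor $\Mor_S(X,-)$ itself, then verifying each of the three preservation properties separately. For the construction, I would recall the existence of the representing scheme $\Mor_S(X,Y)$ as an open subscheme of the Hilbert scheme $\HHilb_S(X \times_S Y)$ — or more directly as an open subscheme of $\Quot$ — parametrizing graphs of morphisms, which exists because $X$ is projective and flat over the Noetherian base $S$ (this is the classical construction of Grothendieck). On morphisms, given $f \colon Y' \to Y$ in $\cat{QProj}/S$, the map $f_M \colon \Mor_S(X,Y') \to \Mor_S(X,Y)$ is post-composition $g \mapsto f \circ g$; functoriality (compatibility with identities and composition) is then a formal check on the functor of points.

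Next I would handle the three preservation claims, in increasing order of subtlety. The cleanest approach throughout is to argue at the level of represented functors, using Yoneda: for a test $S$-scheme $T$, the $T$-points of $\Mor_S(X,Y)$ are the $S$-morphisms $X \times_S T \to Y$. \emph{Limits} should follow almost formally: for a limit $Y = \varprojlim Y_i$, a morphism $X \times_S T \to \varprojlim Y_i$ is the same datum as a compatible family of morphisms $X \times_S T \to Y_i$, so $\Mor_S(X, \varprojlim Y_i)$ represents $\varprojlim \Mor_S(X, Y_i)$ by the universal property, provided one checks that the relevant limits of quasi-projective schemes stay inside $\cat{QProj}/S$ (finite limits, in particular fibre products, suffice here since $\cat{QProj}/S$ is not closed under arbitrary limits). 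For \emph{closed immersions}: if $f \colon Y' \hookrightarrow Y$ is a closed immersion, then a $T$-point $g \colon X \times_S T \to Y$ of $\Mor_S(X,Y)$ factors through $Y'$ if and only if it lands in the closed subscheme, and the locus of $T$ where this happens is represented by a closed subscheme — I expect to realize $\Mor_S(X,Y')$ as the scheme-theoretic preimage $f_M^{-1}$ of the point, or equivalently to use that $\Mor_S(X, Y') = \Mor_S(X,Y) \times_{\Mor_S(X \times_S T', Y)} (\cdots)$, making $f_M$ a closed immersion via a fibre-product/base-change argument combined with properness of $X$ over $S$ (so that the condition ``image lands in $Y'$'' is closed).

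The \emph{main obstacle} I anticipate is the open immersion case, because openness of a condition on morphisms is genuinely more delicate than closedness: given an open immersion $f \colon Y' \hookrightarrow Y$, the set of $g \colon X \times_S T \to Y$ whose image lands in the open $Y'$ is constrained by where the \emph{image} lies, and since $X \to S$ is proper the image of a fibre is closed, so ``image contained in the open $Y'$'' means ``image disjoint from the closed complement $Y \setminus Y'$.'' The key point is that the locus in $\Mor_S(X,Y)$ over which the universal morphism $X \times_S \Mor_S(X,Y) \to Y$ avoids a fixed closed subset is itself open, which I would prove using properness of $X$ over $S$: the projection from the pullback of $Y \setminus Y'$ along the universal morphism is proper, hence closed, and the complement of its image in $\Mor_S(X,Y)$ is the desired open locus representing $\Mor_S(X,Y')$. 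Showing that $f_M$ is then an \emph{open immersion} (not merely that its image is open) requires identifying $\Mor_S(X,Y')$ with exactly this open subscheme via the universal property, which I would verify by checking the functor of points match on both sides. Throughout, the Noetherian and flatness hypotheses on $X$ are what guarantee the representability and the properness arguments go through cleanly.
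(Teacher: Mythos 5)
Your construction of the functor, the limits argument, and the open-immersion argument are all essentially correct and match the paper in substance: the paper also gets limits formally from $\Hom$ preserving limits, and also proves openness by a properness argument (it does so at the level of Hilbert functors, showing for a section $V \hookrightarrow X_{S'}$ that $S_0 = S' \smallsetminus g(V \smallsetminus (U_{S'} \cap V))$ is open with $g$ the proper structure morphism, and then transfers this to $\MMor_S(X,-)$ via the Cartesian square identifying $\MMor_S(X,W)$ with the fiber product of $\MMor_S(X,Y)$ and $\HHilb_S(X \times_S W)$ over $\HHilb_S(X \times_S Y)$ along the graph embedding; your version with the universal morphism over $\Mor_S(X,Y)$ is an equivalent repackaging). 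However, there is a genuine gap in your closed-immersion case, and your difficulty assessment is inverted: the open case is the elementary one, while the closed case is where the real content lies, and that is exactly where your sketch fails. The problem is that factoring through a closed subscheme is \emph{not} a set-theoretic condition on images, so properness of $X \to S$ (which only controls images of underlying sets) cannot justify it. Concretely: take $X = S = \Spec k$, $Y = \mathbb{A}^1$, $Z = V(x)$ the reduced origin, and $T = \Spec k[\epsilon]/(\epsilon^2)$; the $T$-point of $\Mor_S(X,Y) \cong \mathbb{A}^1$ given by $x \mapsto \epsilon$ has set-theoretic image contained in $Z$, yet does not factor through $Z$, and the correct closed locus in $T$ is $V(\epsilon) \subsetneq T$ rather than all of $T$. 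So the locus your properness argument would produce carries the wrong scheme structure (it misses the infinitesimal condition), and the phrase ``preimage $f_M^{-1}$ of the point'' does not parse --- there is no distinguished point whose fiber is $\Mor_S(X,Y')$.

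What is actually needed for the closed case is a representability statement for the vanishing of the pulled-back ideal sheaf, and this requires flatness and properness in an essential, non-formal way. The paper supplies it by identifying $\HHilb_S(X) \cong \QQuot_{\caO_X/X/S}$ and invoking the fact (Nitsure, Lemma 5.17(ii), proved via flattening stratification) that a surjection $\caO_X \twoheadrightarrow \caO_Z$ induces a \emph{closed} subfunctor $\QQuot_{\caO_Z/X/S} \hookrightarrow \QQuot_{\caO_X/X/S}$, which is then carried over to $\MMor_S(X,Z) \hookrightarrow \MMor_S(X,Y)$ by the same graph-embedding Cartesian square used in the open case. An alternative you could substitute is a cohomology-and-base-change argument showing that the condition ``the composite $g^*\mathcal{I}_Z \to \caO_{X_T}$ vanishes'' cuts out a closed subscheme of $T$, but some such input is indispensable; your proposal as written contains neither, so the closed-immersion claim is unproved.
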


As an application we describe a natural partition of the scheme parametrizing rational curves on the blow-up of a projective space at finitely many points. This follows from several applications of the theorem above using the functor $\Mor(\PR^1, -)$. To state it we introduce the following simple terminology.

\begin{definition}
	\label{dad}
	Let $f: \PR^1 \rightarrow \PR^n$ be a non-constant morphism. Then $f$ is defined by a $(n+1)$-tuple of homogeneous polynomials in $k[u,v]$, denoted $(F_0:\dotsc: F_n)$, with no factors in common and of the same degree. We define the \emph{degree} of the morphism $f$ to be
	\[
	\deg(f) \defeq \deg F_i.
	\]
	Further, let $p$ be a point in $\PR^n$. Up to change of coordinates on $\PR^n$, we may assume $p=(1:0: \dotsb : 0)$. It follows that $p \in f(\PR^1)$ if and only if $\{F_i\}_{1 \le i \le n}$ have factors in common. This is equivalent to saying that there exists a homogeneous polynomial $H \in k[u,v]$ such that $H$ divides $F_i$ for all $1 \le i \le n$. We define
	\[
	m_{p}(f) = \max\{ \deg H \mid H \in k[u,v] \text{ and } H \text{ divides } F_j \text{ for  all } 1 \le j \le n \}
	\]
	to be the \emph{parametric multiplicity} of $p$ on $f$.
\end{definition}

Notice that the parametric multiplicity of $p$ on $f$ is a multiple of the multiplicity of $p$ at the scheme theoretic image of $f$. More precisely, let $f: \PR^1 \rightarrow \PR^n$ be a non-constant morphism of degree $d$, and let $C$ be its scheme theoretic image. If $f$ is birational to its image, then $\deg(f) = \deg(C) = d$ in $\PR^n$ and the parametric multiplicity $m_p(f)$ coincides with the multiplicity of the point $p$ at $C$, denoted $\mu_p(C)$.

If $f$ is not birational to its image, let $\nu: \PR^1 \rightarrow C$ be the normalization of $C$. Then, there is a morphism $g: \PR^1 \rightarrow \PR^1$ such that $f = \nu \circ g$. The morphism $g$ is the composition of a cover and, if the characteristic of $k$ is positive, a Frobenius endomorphism. Clearly, we have
\begin{equation*}
	\deg(f) = \deg(g)\deg(C) \> \mathand \> m_p(f) = \deg(g) \mu_p(C).
\end{equation*}
	
Once we have the definition of parametric multiplicity, let $\sigma: Y \rightarrow \PR^n$ be the blow-up of $\PR^n$ at finitely many points. The main tool we use to describe $\Mor(\PR^1, Y)$ is the study of the morphism
\[
\sigma_M: \Mor(\PR^1, Y) \rightarrow \Mor(\PR^1, \PR^n).
\]
Recall that
\[
\Mor(\PR^1, \PR^n) = \coprod_{d \ge 0} \Mor_d(\PR^1, \PR^n),
\]
where $\Mor_d(\PR^1,\PR^n)$ is a closed subscheme parametrizing morphisms of degree $d$. It is well known that $\Mor_d(\PR^1, \PR^n)$ is irreducible and nonsingular of dimension $(n+1)(d+1)-1$, see for instance \cite[Section 2.1]{debarre-HDAG}.

For any variety $Y$, let $\Mor_{>0}(\PR^1,Y)$ denote the closed subscheme of $\Mor(\PR^1,Y)$ parametrizing non-constant morphisms from $\PR^1$ to $Y$. We also denote by $[g]$ a $k$-point in $\Mor(\PR^1,Y)$ corresponding to a morphism $g: \PR^1 \rightarrow Y$. Then, our main result is the following.

\begin{theorem}
	\label{aac}
	Let $\{p_1, \dotsc, p_r\} \subset \PR^n$ be a finite collection of points in a projective space. Let $\sigma: Y \rightarrow \PR^n$ be the blow-up of $\PR^n$ along these points with exceptional divisor $E$. Let $\mathbf{m} \defeq (m_1, \dotsc, m_r)$ denote an $r$-tuple of non-negative integers. Then, we have the partition in closed subschemes
	\[
	\Mor_{>0}(\PR^1, Y) \cong \left( \coprod_{d > 0} \coprod_{m_i \le d} M_{d,\mathbf{m}} \right) \amalg \Mor_{>0}(\PR^1, E),
	\]
	where
	\begin{enumerate}
		\item \label{dah} a $k$-point $[g]$ belongs to $M_{d, \mathbf{m}}$ if and only if
		\begin{align*}
		\deg (\sigma \circ g) = d  \mathand m_{p_i}(\sigma \circ g) = m_i \mathfor 1 \le i \le r;
		\end{align*}
		
		\item \label{dai} $\Mor_{>0}(\PR^1, E) \cong \coprod_{i=1}^r \coprod_{e>0} \Mor_e(\PR^1, \PR^{n-1})$.
	\end{enumerate} 
	In particular,
	\begin{enumerate}
		\setcounter{enumi}{2}
		\item \label{daj} for $\mathbf{0} \defeq (0, \dotsc, 0)$ and each positive integer $d$, the subschemes $M_{d, \mathbf{0}}$ are nonsingular of dimension $(n+1)(d+1)-1$ and parametrize morphisms whose images do not intersect $E$.
		\item \label{dak} if $\mathbf{m} \neq \mathbf{0}$, every irreducible component of $M_{d,\mathbf{m}}$ has dimension strictly smaller than $\dim M_{d,\mathbf{0}}$.
	\end{enumerate}
\end{theorem}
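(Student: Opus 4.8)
The plan is to route everything through the induced morphism $\sigma_M\colon \Mor(\PR^1,Y)\to\Mor(\PR^1,\PR^n)$ together with two immersions supplied by the blow-up, feeding them into Theorem~\ref{aad}. Write $E=\coprod_{i=1}^r E_i$ with each $E_i\cong\PR^{n-1}$, and recall the standard closed immersion $\iota\colon Y\hookrightarrow \PR^n\times\prod_{i=1}^r\PR^{n-1}$ given by $(\sigma,\pi_1,\dots,\pi_r)$, where $\pi_i$ extends the projection of $\PR^n$ away from $p_i$. Since $E\hookrightarrow Y$ is a closed immersion, Theorem~\ref{aad} realizes $\Mor_{>0}(\PR^1,E)$ as a closed subscheme of $\Mor_{>0}(\PR^1,Y)$; because $\PR^1$ is connected, every non-constant $g$ with $g(\PR^1)\subseteq E$ factors through a single $E_i$, and intersecting with the clopen degree strata of $\Mor(\PR^1,\PR^{n-1})$ yields the identification~(\ref{dai}). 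A non-constant $g$ not landing in $E$ has $\sigma\circ g$ non-constant, so the two cases are exhaustive.

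For the fine partition and~(\ref{dah}) I would apply Theorem~\ref{aad} to $\iota$: preservation of the product (a limit) and of the closed immersion gives a closed immersion
\[
\iota_M\colon \Mor(\PR^1,Y)\hookrightarrow \Mor(\PR^1,\PR^n)\times\prod_{i=1}^r\Mor(\PR^1,\PR^{n-1}),
\]
through which the projection to the first factor recovers $\sigma_M$. Pulling back the clopen multidegree decomposition of the target splits $\Mor(\PR^1,Y)$ into clopen pieces indexed by $(d,e_1,\dots,e_r)$, with $d=\deg(\sigma\circ g)$ and $e_i=\deg(\pi_i\circ g)$. A direct computation with the common-factor description of Definition~\ref{dad} shows $e_i=d-m_{p_i}(\sigma\circ g)$; setting $m_i=d-e_i$ identifies these pieces (for $d>0$) with the $M_{d,\mathbf{m}}$ and recovers the constraint $m_i\le d$, which establishes the partition together with~(\ref{dah}).

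For~(\ref{daj}), the condition $\mathbf{m}=\mathbf{0}$ means $g(\PR^1)\cap E=\varnothing$, i.e.\ $g$ factors through the open subscheme $Y\setminus E$. As $\sigma$ restricts to an isomorphism $Y\setminus E\xrightarrow{\sim}\PR^n\setminus\{p_1,\dots,p_r\}$, and the functor preserves isomorphisms and open immersions, $M_{d,\mathbf{0}}$ is isomorphic to the open subscheme of $\Mor_d(\PR^1,\PR^n)$ of morphisms avoiding the $p_i$. Being a non-empty open subscheme of the irreducible nonsingular variety $\Mor_d(\PR^1,\PR^n)$ of dimension $(n+1)(d+1)-1$, it is nonsingular of that dimension, and by construction parametrizes morphisms whose images miss $E$.

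The crux is~(\ref{dak}). For non-constant $h$ the lift to $Y$ is unique (two lifts agree on the dense locus where $\sigma$ is an isomorphism), so $\sigma_M$ restricts to a morphism $M_{d,\mathbf{m}}\to\Mor_d(\PR^1,\PR^n)$ that is injective on $k$-points, hence quasi-finite, giving $\dim M_{d,\mathbf{m}}=\dim W_{d,\mathbf{m}}$ where $W_{d,\mathbf{m}}$ is its image, the locus of $h$ with $m_{p_i}(h)=m_i$. It then suffices to bound $\dim\{\,h:m_{p_i}(h)\ge m_i\ \forall i\,\}$. I would parametrize this by choosing, for each $i$, the degree-$m_i$ common factor $G_i$ (a $\PR^{m_i}$) and requiring the $n$ coordinate forms of $h$ vanishing at $p_i$ to be divisible by $G_i$; divisibility is linear, imposing $n m_i$ conditions per point, while the factor-choices contribute $\sum_i m_i$. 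Granting that these conditions cut the expected codimension, the locus has dimension at most $(n+1)(d+1)-1-(n-1)\sum_i m_i$, which for $n\ge 2$ and $\mathbf{m}\ne\mathbf{0}$ lies strictly below $\dim M_{d,\mathbf{0}}$, bounding every irreducible component. The main obstacle is precisely this codimension estimate: one must verify that the divisibility conditions at the distinct points $p_i$ are independent, so that the codimension is genuinely $(n-1)\sum_i m_i$ and not smaller, which requires controlling the strata where the divisors $\Delta_i=\{G_i=0\}$ collide or the linear conditions degenerate; the distinctness of the $p_i$ is what should make the argument go through.
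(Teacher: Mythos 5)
Your treatment of the partition and of items (\ref{dah})--(\ref{daj}) is essentially the paper's own argument in lightly repackaged form: the paper embeds $Y$ into the fibered product $\Bl_{p_1}(\PR^n)\times_{\PR^n}\dotsb\times_{\PR^n}\Bl_{p_r}(\PR^n)$ (Lemma \ref{dam}) and each single-point blow-up into $\PR^n\times\PR^{n-1}$, then applies Theorem \ref{aad} to pull back the clopen degree strata and runs the same common-factor computation, namely that $F_i=H\cdot G_i$ forces $e=d-m_p(\sigma\circ g)$, which you invoke as $e_i=d-m_{p_i}(\sigma\circ g)$. Your single closed immersion $Y\hookrightarrow\PR^n\times\prod_{i}\PR^{n-1}$ is just the composite of the paper's immersions, and it even spares you the paper's short extra step showing that the degrees $d_i$ read off the several factors must coincide, since your target has only one $\PR^n$-factor. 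Those parts, together with your handling of $\Mor_{>0}(\PR^1,E)$ via connectedness of $\PR^1$ and your identification of $M_{d,\mathbf{0}}$ with the open subscheme $\Mor_d(\PR^1,\PR^n\smallsetminus\{p_1,\dotsc,p_r\})$, are sound and match the paper.

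The genuine gap is in your proof of (\ref{dak}). You correctly reduce, via injectivity of $\sigma_M$ on $k$-points and quasi-finiteness, to bounding $\dim W_{d,\mathbf{m}}$, but your bound $(n+1)(d+1)-1-(n-1)\sum_i m_i$ rests on the divisibility conditions cutting the expected codimension, and you explicitly concede this step (``granting that these conditions cut the expected codimension''). The concession is not innocuous: on strata where the chosen factors $G_i$ and $G_j$ share a root $t_0$, the $2n$ linear conditions at $t_0$ imposed by $p_i$ and $p_j$ span only an $(n+1)$-dimensional space of functionals (the coordinate forms vanishing at $p_i$ together with those vanishing at $p_j$ span all of $k^{n+1}$ because $p_i\neq p_j$), so independence genuinely fails for $n\ge 2$, and one must separately show such excess fibers land in the base-point locus outside the morphism space. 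As written, (\ref{dak}) is therefore unproven. Moreover the hard estimate is unnecessary for the statement: from the $k$-point injectivity you already have (this is the paper's Proposition \ref{dab}), $\dim M_{d,\mathbf{m}}$ equals the dimension of the closure of $W_{d,\mathbf{m}}$, and for $\mathbf{m}\neq\mathbf{0}$ the set $W_{d,\mathbf{m}}$ is disjoint from the dense open $W_{d,\mathbf{0}}=\Mor_d(\PR^1,\PR^n\smallsetminus\{p_1,\dotsc,p_r\})$ inside the irreducible scheme $\Mor_d(\PR^1,\PR^n)$; hence $\overline{W_{d,\mathbf{m}}}$ is a proper closed subset and every component of $M_{d,\mathbf{m}}$ has dimension strictly below $(n+1)(d+1)-1=\dim M_{d,\mathbf{0}}$. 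This soft argument, via the theorem on dimension of fibers and uniqueness of the dominating component, is exactly how the paper concludes; your quantitative bound, if completed, would prove more than (\ref{dak}) asserts, but it is not needed.
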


\vskip 0.5\baselineskip

\textbf{Structure of the paper.} In Section \ref{baa} we recall the definition of Hilbert schemes and schemes of morphisms via their functor of points and deduce properties of natural transformations between these functors to prove Theorem \ref{aad}. In Section \ref{daa} we study schemes parametrizing morphisms from curves to blow-ups of projective schemes at closed subschemes. We recall simple properties of blow-ups of projective spaces at points to prove Theorem \ref{aac}.

\vskip 0.5\baselineskip

\textbf{Acknowledgements.} I would like to thank Vladimir Guletski\u{\i} for suggesting the problem and for many useful discussions. I am grateful to Thomas Eckl, Roy Skjelnes and the anonymous referee for helpful suggestions.

This work was supported by CNPq, National Council for Scientific and Technological Development under the grant [159845/2019-0].

\section{Properties of schemes of morphisms}
\label{baa}

Let $X$ be a separated scheme over a Noetherian scheme $S$ and let $\cat{Noe}/S$ be the category of locally Noetherian schemes over $S$. Recall that the \emph{Hilbert functor}
\[
\HHilb_S(X): (\cat{Noe}/S)^{op} \rightarrow \cat{Set} 
\]
is defined on $S$-schemes as
\[
S' \mapsto \left\{ \begin{array}{c}
V \hookrightarrow X_{S'} \text{ closed subscheme} \\
\text{such that } V \rightarrow S' \text{ is flat and proper}
\end{array} \right\}
\]
and is defined on morphisms by base change. If $X$ is a projective scheme over $S$, this functor is representable by a scheme called the \emph{Hilbert scheme} of $X$ and denoted $\Hilb_S(X)$, see for instance \cite[Theorem 5.16]{nitsure-05} or \cite[Theorem I.1.4]{kollar-RCAV}.

Notice that if $\iota: W \hookrightarrow X$ is an immersion of schemes over $S$, then $\iota$ induces a monomorphism of functors
\begin{equation}
\label{baq}
\eta_\iota: \HHilb_S(W) \hookrightarrow \HHilb_S(X).
\end{equation}
Indeed, it suffices to check this sectionwise: since $X$ is separated, for any $S$-scheme $S'$ and $V \in \HHilb_S(W)(S')$ it follows that the composition $V \hookrightarrow W_{S'} \overset{\iota_{S'}}{\hookrightarrow} X_{S'}$ is proper, hence it is a closed immersion which is flat over $S'$ by assumption. Furthermore, injectivity follows since $\iota_{S'}$ is an immersion.

\begin{definition}
	\label{bae}
	Let $S$ be a scheme and $\cat{C}$ be a full subcategory of the category $\cat{Sch}/S$ of schemes over $S$. For each scheme $Y$ in $\cat{C}$ let $h_Y$ denote its functor of points.
	
	Let $\alpha: \caF \hookrightarrow \caG$ be a monomorphism of contravariant functors on $\cat{C}$. We say $\caF$ is \emph{open} (resp. \emph{closed}) if for every scheme $Y$ in $\cat{C}$ and morphism $\beta: h_Y \rightarrow \caG$, there exists an open (resp. closed) subscheme $\iota_{\beta}: U_{\beta} \hookrightarrow Y$ in $\cat{C}$ such that the fiber product $\caF \times_\caG h_Y$ is naturally isomorphic to $h_{U_\beta}$.
\end{definition}

\begin{remark}
	\label{baf}
	The definition above can be stated in an equivalent way which is more suitable for applications as follows. A monomorphism $\alpha: \caF \hookrightarrow \caG$ is an open (resp. closed) subfunctor if for each $S$-scheme $Y$ and section $V \in \caG(Y)$, there exists an open (resp. closed) subscheme $U_V \hookrightarrow Y$ in $\cat{C}$ satisfying the following universal property:
	\begin{equation*}
	\label{zacd}
	\begin{split}
	\text{a morphism } f: X \rightarrow Y \text{ in } \cat{C} & \text{ factors through } U_V \\
	\text{ if and only if } \caG(f)(V) & \in \alpha_X(\caF(X)).
	\end{split}
	\end{equation*}
\end{remark}

\begin{proposition}
	\label{bac}
	Let $S$ be a Noetherian scheme, $X$ be a separated $S$-scheme and let $j: U \rightarrow X$ be an open immersion of schemes over $S$. Then $\HHilb_{S}(U)$ is an open subfunctor of $\HHilb_{S}(X)$. In particular, if $X$ is projective over $S$, then $\HHilb_{S}(U)$ is representable by an open subscheme $\Hilb_{S}(U) \hookrightarrow \Hilb_{S}(X)$.
\end{proposition}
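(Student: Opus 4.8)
The monomorphism $\eta_j : \HHilb_S(U) \hookrightarrow \HHilb_S(X)$ is already supplied by \eqref{baq}, so the only task is to check that it is an \emph{open} subfunctor in the sense of Definition \ref{bae}, for which I will verify the equivalent universal property recorded in Remark \ref{baf}. Fix a locally Noetherian $S$-scheme $Y$ together with a section $V \in \HHilb_S(X)(Y)$, that is, a closed subscheme $V \hookrightarrow X_Y \defeq X \times_S Y$ which is flat and proper over $Y$. Writing $U_Y \defeq U \times_S Y$, which is open in $X_Y$ because $j$ is an open immersion, a section of $\HHilb_S(X)(Y)$ lies in the image of $\eta_{j,Y}$ precisely when $V$ factors through $U_Y$ as a subscheme of $X_Y$; and since $V \hookrightarrow X_Y$ is a closed immersion and $U_Y$ is open, this happens if and only if $|V| \subseteq |U_Y|$ topologically.

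The plan is to produce the required open $U_V \subseteq Y$ as the complement of an image. Set $W \defeq |V| \cap |X_Y \setminus U_Y|$, a closed subset of $V$, equipped with its reduced induced structure. The inclusion $W \hookrightarrow V$ is a closed immersion, hence proper, and $V \to Y$ is proper by hypothesis, so the composite $W \to Y$ is proper and its image $\pi(W) \subseteq Y$ is closed. Define $U_V \defeq Y \setminus \pi(W)$, an open subscheme of $Y$, which is again locally Noetherian and therefore lies in the relevant category.

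It then remains to show that a morphism $f : T \to Y$ factors through $U_V$ if and only if $V_T \defeq V \times_Y T$ belongs to $\HHilb_S(U)(T)$. The first reduction is that, since $V_T \hookrightarrow X_T$ is a closed immersion and $U_T \defeq U_Y \times_Y T$ is open in $X_T$, membership $V_T \in \HHilb_S(U)(T)$ is equivalent to the purely topological containment $|V_T| \subseteq |U_T|$: when this holds, $V_T$ is supported in $U_T$, so $V_T = V_T \times_{X_T} U_T$ is a closed subscheme of $U_T$, and it remains flat and proper over $T$ by base change. Writing $q : X_T \to X_Y$ for the projection induced by $f$, base change of closed immersions gives $|V_T| \cap |X_T \setminus U_T| = q^{-1}(W)$, so the condition becomes $q^{-1}(W) = \emptyset$. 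Finally one checks $q^{-1}(W) = \emptyset \iff f^{-1}(\pi(W)) = \emptyset \iff f \text{ factors through } U_V$: the forward direction follows from the commutativity of $W \to Y$ with the projections, while for the converse any $t \in T$ with $f(t) \in \pi(W)$ has nonempty fibre $W_{f(t)}$, whose base change along the residue field extension $\kappa(f(t)) \to \kappa(t)$ is again nonempty and lies in $q^{-1}(W)$.

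The hard part will be exactly this last equivalence, since it must hold for an arbitrary test scheme $T$ and not merely for points; the two maneuvers that make it go through are reducing scheme-theoretic containment to the topological statement $|V_T| \subseteq |U_T|$, and controlling the complement together with its image simultaneously under base change, where properness of $V \to Y$ is precisely what guarantees that $\pi(W)$ is closed so that $U_V$ is genuinely open. Granting the openness, the final assertion is immediate from Definition \ref{bae}: if $X$ is projective over $S$ then $\HHilb_S(X) = h_{\Hilb_S(X)}$ is representable, and taking $Y = \Hilb_S(X)$, with $V$ the universal family and $\beta = \id$, exhibits $\HHilb_S(U)$ as represented by the open subscheme $U_{\id} = \Hilb_S(U) \hookrightarrow \Hilb_S(X)$.
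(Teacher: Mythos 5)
Your proof is correct and takes essentially the same route as the paper: your open set $U_V = Y \smallsetminus \pi\bigl(|V| \cap |X_Y \smallsetminus U_Y|\bigr)$ is exactly the paper's $S_0 = S' \smallsetminus g(V \smallsetminus j_{S'}'(U_{S'} \cap V))$, and your residue-field base-change argument (nonemptiness of $W_{f(t)} \times_{\kappa(f(t))} \kappa(t)$) is the same mechanism as the paper's fibrewise surjectivity check along $\Spec \kappa(q) \rightarrow \Spec \kappa(p)$. Your early reduction of scheme-theoretic factorization through $U_T$ to the topological containment $|V_T| \subseteq |U_T|$ is a clean packaging of what the paper verifies implicitly.
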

\begin{proof}
	We have a monomorphism $\eta_j:\HHilb_S(U) \hookrightarrow \HHilb_S(X)$. To prove it is an open subfunctor, it suffices to prove that for every $S$-scheme $S'$ and section in $\HHilb_S(X)(S')$ given by a closed subscheme $i: V \hookrightarrow X_{S'}$, there exists an open subscheme $S_0 \hookrightarrow S'$ satisfying the property:
	\begin{equation}
	\label{bad}
	\begin{split}
	\text{for any } S\text{-morphism } & f: S'' \rightarrow S', f \text{ factors through } S_0 \text{ if and} \\ \text{only if the base change } & i_{S''}: V \times_{S'} {S''} \hookrightarrow X_{S''} \text{ factors through } U_{S''}.
	\end{split}
	\end{equation}
	
	Consider the scheme theoretic intersection $U_{S'} \cap V$ fitting in the commutative diagram
	\[
	\begin{tikzcd}
	U_{S'} \cap V \ar[r, hook, "i'"] \ar[d, hook, "j_{S'}'"' ] & U_{S'} \ar[d, hook, "j_{S'}"] &\\
	V \ar[r, hook, "i"] \ar[rr, bend right =12, "g"'] & X_{S'} \ar[r] & S',
	\end{tikzcd}
	\]
	where the square is Cartesian. Since $g$ is proper, the subset \[
	S_0 = S' \smallsetminus g(V \smallsetminus j_{S'}'(U_{S'} \cap V))
	\]
	is open and it is straightforward to check that 
	\begin{equation}
	\label{bag}
	S_0 = \{p \in S' \mid i_p: V_p \hookrightarrow X_{S',p} \text{ factors through } U_{S',p} \}
	\end{equation}
	where $V_p, X_{S',p}, U_{S',p}$ denote the fibers of $V, X_{S'}$ and $U_{S'}$ over a point $p \in S'$ respectively.
	
	We claim $S_0$ satisfies property \eqref{bad}. Indeed, let $f:S'' \rightarrow S'$ be a $S$-morphism and suppose that the base change $i_{S''}: V_{S''} \defeq V \times_{S'} S'' \hookrightarrow X_{S''}$ factors through $U_{S''}$. Let $q \in S''$ be a point and $p = f(q)$. Notice that, in particular, the morphism between fibers $i_{S'',q}: V_{S'',q} \hookrightarrow X_{S'',q}$ factors through the fiber $U_{S'',q}$. Consider the canonical morphism $\alpha: \Spec \kappa(q) \rightarrow \Spec \kappa(p)$ and the induced commutative diagram
	\[
	\begin{tikzcd}
	& V_{S'',q} \ar[dl] \ar[dd, near start, "\alpha_V"] \ar[dr, hook, "i_{S'',q}"] & & \\[-10pt]
	U_{S'',q} \ar[dd, "\alpha_U"'] \ar[rr, hook, crossing over, near start, "j_{S'',q}"']& & X_{S'',q} \ar[dd, "\alpha_X"] \ar[r] & \Spec \kappa(q) \ar[dd, "\alpha"] \\[-20pt]
	& V_p \ar[dr, hook, "i_p"] &  &  \\[-10pt]
	U_{S',p}  \ar[rr, hook, "j_{S',p}"] & & X_{S'p}  \ar[r] & \Spec \kappa(p).
	\end{tikzcd}
	\]
	where the squares are Cartesian. Notice that since $\alpha$ is a morphism between points it is surjective, hence  the morphisms $\alpha_V,\alpha_U,\alpha_X$ are surjective. For any $v \in V_p$, let $v'' \in V_{S'',q}$ be a point such that $\alpha_V(v'')= v$. On the other hand, $i_{S'',q}(v'') = j_{S'',q}(u'')$ for some $u'' \in U_{S'',q}$. Denote $u = \alpha_U(u'')$. It follows that $j_{S',p}(u) = i_p(v)$. In other words $i_p(V_p)$ is contained in $j_{S',p}(U_{S',p})$. By \eqref{bag}, $p = f(q) \in S_0$ for all $q \in S''$. It follows that $f: S'' \rightarrow S'$ factors through $S_0 \hookrightarrow S'$.
	
	Conversely, suppose that $f: S'' \rightarrow S'$ factors through $S_0$. Let $i_{S_0}: V_{S_0} \hookrightarrow X_{S_0}$ be the base change of $i$ with respect to the inclusion $S_0 \hookrightarrow S$. Then by \eqref{bag}, we have
	\[
	i_{S_0,p}(V_{S_0,p}) \subseteq j_{S_0,p}(U_{S_0,p}) \subseteq j_{S_0}(U_{S_0})
	\]
	for all $p \in S_0$, that is, $i_{S_0}$ factors through $j_{S_0}$. It follows immediately that $i_{S''}: V_{S''} \rightarrow X_{S''}$ factors through $U_{S''} \cong U_{S_0} \times_{S_0} S''$ by the universal property of the fiber product.
\end{proof}

\begin{proposition}
	\label{bah}
	Let $S$ be a Noetherian scheme, $X$ be a separated $S$-scheme and let $i: Z \hookrightarrow X$ be a closed immersion of schemes over $S$. Then $\HHilb_{S}(Z)$ is a closed subfunctor of $\HHilb_{S}(X)$. In particular, if $X$ is projective over $S$, then $\HHilb_{S}(Z)$ is representable by a closed subscheme $\Hilb_{S}(Z) \hookrightarrow \Hilb_{S}(X)$.
\end{proposition}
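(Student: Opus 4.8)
The plan is to mirror the proof of Proposition~\ref{bac}, replacing the open locus where the family lands in $U$ by the closed locus where it is contained in $Z$. By \eqref{baq} the closed immersion $i$ induces a monomorphism $\eta_i \colon \HHilb_S(Z) \hookrightarrow \HHilb_S(X)$, so by Remark~\ref{baf} it suffices, for every $S$-scheme $S'$ and every section $V \hookrightarrow X_{S'}$ of $\HHilb_S(X)(S')$ (a closed subscheme flat and proper over $S'$), to produce a closed subscheme $S_0 \hookrightarrow S'$ such that an $S$-morphism $f \colon S'' \to S'$ factors through $S_0$ if and only if the base change $V_{S''} \hookrightarrow X_{S''}$ factors through $Z_{S''}$. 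First I would form the scheme-theoretic intersection $W \defeq V \times_{X_{S'}} Z_{S'}$, a closed subscheme of $V$ with closed immersion $\iota \colon W \hookrightarrow V$. Since fiber products commute with base change, $W_{S''} = V_{S''} \times_{X_{S''}} Z_{S''}$, and the factorization of $V_{S''}$ through $Z_{S''}$ is exactly the statement that $\iota_{S''}$ is an isomorphism.

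Writing $\mathcal{I} \defeq \ker(\mathcal{O}_V \to \mathcal{O}_W)$ for the ideal sheaf of $W$ in $V$, and using that $\mathcal{O}_W$ commutes with base change while $\mathcal{O}_V$ is flat over $S'$, the condition ``$\iota_{S''}$ is an isomorphism'' translates into ``the inclusion $\phi \colon \mathcal{I} \hookrightarrow \mathcal{O}_V$ becomes the zero map after pulling back along $f$''. Thus $S_0$ must represent the locus where $\phi$ vanishes after base change. The crucial structural point, which is what makes this a \emph{closed} (rather than open) condition, is that the target $\mathcal{O}_V$ of $\phi$ is flat over $S'$, so the vanishing of $\phi$ is detected on a locally free sheaf. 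As a sanity check on the underlying topological space, flatness of $g \colon V \to S'$ forces the set $\{p \in S' \mid V_p \subseteq Z_{S',p}\}$ to be closed: its complement is $g(V \smallsetminus W)$, which is open because a flat morphism of finite type over a Noetherian base is open.

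The heart of the argument, and the step I expect to be the main obstacle, is endowing $S_0$ with its scheme structure and descending the vanishing condition from $V$ down to the base $S'$; this is where properness of $g$ enters. In the relatively projective setting (in particular whenever $X$ is projective over $S$) I would twist by a relatively very ample line bundle $\mathcal{O}_V(1)$ and push forward: for $n \gg 0$ the sheaves $\mathcal{I}(n)$ and $\mathcal{O}_V(n)$ are relatively globally generated and $g_*\mathcal{O}_V(n)$ is locally free and commutes with base change, so by adjunction ``$f^*\phi(n) = 0$'' is equivalent to the vanishing of the pullback of $\psi \defeq g_*(\phi(n)) \colon g_*\mathcal{I}(n) \to g_*\mathcal{O}_V(n)$. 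Since the target of $\psi$ is locally free, the ideal $I \subseteq \mathcal{O}_{S'}$ generated by the components of $\psi$ in local trivializations cuts out a closed subscheme $S_0 = V(I)$; one checks this is independent of $n \gg 0$ and glues over an affine cover of $S'$. With $S_0$ in hand, the universal property is verified fiberwise along Cartesian squares of fibers $V_p, X_{S',p}, Z_{S',p}$, exactly as in Proposition~\ref{bac}: a pointwise description analogous to \eqref{bag} shows that $f$ factors through $S_0$ precisely when each fiber $V_q$ lands in $Z_{S'',q}$, which is equivalent to $V_{S''}$ factoring through $Z_{S''}$. (For merely separated $X$, where relative ampleness is unavailable, the same $S_0$ is produced by the flattening stratification of the coherent sheaf $\mathcal{O}_W$ on the proper family $V \to S'$, which is the technically heaviest ingredient.)

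Finally, the ``in particular'' statement follows formally: when $X$ is projective over $S$ the functor $\HHilb_S(X)$ is represented by $\Hilb_S(X)$, and applying Definition~\ref{bae} to the identity morphism $h_{\Hilb_S(X)} \to \HHilb_S(X)$ exhibits the closed subfunctor $\HHilb_S(Z)$ as the functor of points of a closed subscheme $\Hilb_S(Z) \hookrightarrow \Hilb_S(X)$. As an alternative and more direct route to this representability, one may note that $Z$ is then also projective over $S$, so the induced morphism $\Hilb_S(Z) \to \Hilb_S(X)$ is a morphism between schemes proper over $S$ on each Hilbert-polynomial component, hence a proper monomorphism, and therefore a closed immersion.
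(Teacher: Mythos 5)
Your route is genuinely different from the paper's. The paper's proof of Proposition \ref{bah} is a short formal reduction: it identifies $\HHilb_S(X)$ with $\QQuot_{\caO_X/X/S}$, feeds the canonical surjection $\caO_X \twoheadrightarrow \caO_Z$ into \cite[Lemma 5.17(ii)]{nitsure-05} to get a closed subfunctor $\QQuot_{\caO_Z/X/S} \hookrightarrow \QQuot_{\caO_X/X/S}$, and then identifies $\QQuot_{\caO_Z/X/S} \cong \HHilb_S(Z)$ because $\caO_Z$ is supported on $Z$. You instead re-prove by hand the semicontinuity statement hiding inside that citation: the locus where a homomorphism $\phi\colon \mathcal{I} \to \caO_V$ into an $S'$-flat sheaf on a projective family vanishes after base change is a closed subscheme. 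Your reduction of the containment $V_{S''} \subseteq Z_{S''}$ to the vanishing of the pullback of $\phi$ is correct (right-exactness of pullback gives the equivalence; injectivity of $\phi$ is irrelevant after base change, as you implicitly note), and the twisting argument -- global generation of $\mathcal{I}(n)$, local freeness and base-change compatibility of $g_*\caO_V(n)$ for $n \gg 0$, vanishing into a locally free sheaf cut out by an ideal, uniqueness of the representing closed subscheme giving independence of $n$ and gluing -- is sound and standard. So in the projective case your argument is complete and self-contained, and it also yields the ``in particular'' clause; what the paper buys with the Quot-functor citation is brevity and the general separated case for free.

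Two points need repair. First, the claim that ``the universal property is verified fiberwise \dots exactly as in Proposition \ref{bac}'' via an analogue of \eqref{bag} is wrong in principle and should be deleted: factoring through a \emph{closed} subscheme is not a point-set condition (take $S' = \Spec k[\epsilon]/(\epsilon^2)$, $S_0 = \Spec k$, $f = \id$), and fiberwise containment $V_{S'',q} \subseteq Z_{S'',q}$ does not imply $V_{S''} \subseteq Z_{S''}$ (compare $V(x-\epsilon)$ with $Z = V(x)$ over $\Spec k[\epsilon]/(\epsilon^2)$). This is precisely the asymmetry between the open and closed cases, and it also infects your topological ``sanity check'': the complement of $g(V \smallsetminus W)$ is the locus of set-theoretic, not scheme-theoretic, fiber containment. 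Fortunately the sentence is redundant: your chain of equivalences ($f$ factors through the subscheme cut out by $I$ iff $f^*\psi = 0$ iff the pullback of $\phi$ vanishes iff $V_{S''}$ factors through $Z_{S''}$) already \emph{is} the universal property. Second, the fallback for merely separated $X$ via a ``flattening stratification of $\caO_W$'' is a genuine gap: flattening stratifications are classically available for projective, not arbitrary proper, morphisms, and even granted one you would still need to argue that the relevant stratum is closed with the correct scheme structure and universal property. The standard tool in the proper case is instead the linearization of Hom functors (EGA III$_2$, 7.7.8--7.7.9): properness of $g$ and flatness of $\caO_V$ give a coherent sheaf $Q$ on $S'$ with $\Hom_{V_{S''}}(\mathcal{I}_{S''}, \caO_{V_{S''}}) \cong \Hom_{\caO_{S''}}(Q_{S''}, \caO_{S''})$ functorially, and $S_0$ is cut out by the image of the map $Q \to \caO_{S'}$ corresponding to $\phi$; this is in substance what the paper's citation encapsulates. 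Your alternative closing argument (a proper monomorphism between Hilbert-polynomial components is a closed immersion) is correct and gives representability directly, provided you polarize $Z$ by the restriction of $\caO_X(1)$ so that the morphism respects components.
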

\begin{proof}
	Let $\caE$ be a coherent sheaf over $X$ and for each morphism $f: S'\rightarrow S$ denote the first projection $\pr_1: X_{S'} \rightarrow X$. Recall there exists a functor
	\[
	\QQuot_{\caE/X/S}: (\cat{Noe}/S)^{op} \rightarrow \cat{Set}
	\]
	associating every morphism $f: S' \rightarrow S$ to the set of isomorphism classes of families of quotients of $\caE_{S'} \defeq  \pr_1^*\caE$, which are flat over $S'$ and have proper support, see \cite[Section 5.1.3]{nitsure-05}.
	
	In particular, when $\caE = \caO_X$, we have $\QQuot_{\caO_X/X/S} \cong \HHilb_S(X)$. If $i: Z \hookrightarrow X$ is a closed immersion, we have a canonical surjection $f:\caO_X \twoheadrightarrow \caO_Z$ over $X$. Moreover, by \cite[Lemma 5.17(ii)]{nitsure-05} we have a closed subfunctor
	\[
	\delta:\QQuot_{\caO_Z/X/S} \rightarrow \QQuot_{\caO_X/X/S}
	\]
	given on each $S$-scheme $S'$ by taking the class of the quotient $q: \pr_1^*\caO_Z \twoheadrightarrow \caF$ to the class of the quotient 
	\[
	\caO_{X_{S'}} \overset{\pr^*_1 f}{\twoheadrightarrow} \pr_1^*\caO_Z \overset{q}{\twoheadrightarrow} \caF.
	\]
	Since the support of $\caO_Z$ on $X$ is $Z$, we have a canonical isomorphism
	\[
	\QQuot_{\caO_Z/X/S} \cong \QQuot_{\caO_Z/Z/S} \cong \HHilb_S(Z)
	\]
	such that $\delta$ fits in a commutative diagram
	\[
	\begin{tikzcd}
	\QQuot_{\caO_Z/X/S} \ar[r, "\delta"] \ar[d, "\cong" description] & \QQuot_{\caO_X/X/S} \ar[d,  "\cong" description] \\
	\HHilb_S(Z) \ar[r, "\eta_i"] & \HHilb_S(X),
	\end{tikzcd}
	\]
	where $\eta_i$ is defined as in \eqref{baq} for $i: Z \hookrightarrow X$. 
\end{proof}

\begin{definition}
	\label{bai}
	Let $X$ and $Y$ be schemes over a Noetherian scheme $S$. The {\it functor of morphisms}
	\[
	\MMor_S(X,Y): (\cat{Noe}/S)^{op} \rightarrow \cat{Set}
	\]
	from $X$ to $Y$ is defined on $S$-schemes by
	\[
	\MMor_S(X,Y)(S')= \Hom_{S'}(X_{S'}, Y_{S'})
	\]
	where $\Hom_{S'}(X_{S'}, Y_{S'})$ is the set of morphisms from $X_{S'}$ to $Y_{S'}$ over $S'$. For each $S$-morphism $f: S'' \rightarrow S'$, the morphism $\MMor_S(X,Y)(f)$ is defined by base change.
	If $X$ is a separated $S$-scheme, we can define a monomorphism of functors
	\[
	\Gamma: \MMor_S(X,Y) \hookrightarrow \HHilb_S(X \times_S Y)
	\]
	by taking each $S'$-morphism $f: X_{S'} \rightarrow Y_{S'}$ to the graph $\Gamma(f_{S'}): X_{S'} \hookrightarrow (X \times_S Y)_{S'}$. If we assume $X$ is flat, then this monomorphism defines an open subfunctor, see for instance \cite[Theorem 5.23]{nitsure-05}. In particular, if $\HHilb_S(X \times_S Y)$ is representable, $\MMor_S(X,Y)$ is representable by an open subscheme of $\Hilb_S(X \times_S Y)$ denoted $\Mor_S(X, Y)$ called the \emph{scheme of $S$-morphisms} from $X$ to $Y$. This is the case if both $X$ and $Y$ are projective $S$-schemes and $X$ is flat over $S$.
\end{definition}

Let $X$, $Y$, $W$ and $S'$ be $S$-schemes and let $f: W \rightarrow Y$ be a morphism. There is a morphism of functors
\begin{equation*}
\mu_f: \MMor_S(X,W) \rightarrow \MMor_S(X,Y) 
\end{equation*}
defined on sections over $S'$ by taking any morphism $g: X_{S'} \rightarrow W_{S'}$ to the composition $f_{S'} \circ g$, where $f_{S'}$ is the base change of $f$. In fact, this defines a functor
\[
\MMor_S(X, -): \cat{Sch}/S \rightarrow \cat{Psh}(\cat{Sch}/S),
\]
where $\cat{Psh}(\cat{Sch}/S)$ is the category of presheaves of sets over $\cat{Sch}/S$.

In particular, suppose that $X,Y$ and $W$ are $S$-schemes such that both $\MMor_S(X,W)$ and $\MMor_S(X,Y)$ are representable. Then, the morphism $\mu_f$ corresponds to a morphism of schemes
\[
\Mor_S(X, f): \Mor_S(X,W) \rightarrow \Mor_S(X,Y).
\]

\begin{lemma}[Functorial properties]
	\label{bak}
	Let $S$ be a scheme. The following properties hold:
	\begin{enumerate}
		\item \label{cafn} Let $\{X_i\}_{i \in I}$ be a finite family of schemes over $S$ and $Y$ be a scheme over $S$. Then
		\[
		\MMor_S\left(\coprod_{i \in I}X_i, Y\right) \cong \prod_{i \in I}  \MMor_S(X_i, Y),
		\]
		where the right hand side denotes the fiber product over $h_S$;
		
		\item \label{cafo} Let $\cat{I}$ be a category and $\caF: \cat{I} \rightarrow \cat{Sch}/S$ be a functor such that $\lim \caF$ exists in $\cat{Sch}/S$. Then
		\[
		\MMor_S(X, \lim \caF) \cong \lim \MMor_S(X, \caF),
		\]
		where $\MMor_S(X, \caF): \cat{I} \rightarrow \cat{Psh}(\cat{Sch/S})$ is the functor $ i \mapsto \MMor_S(X, \caF(i))$. In other words, $\MMor_S(X,-)$ preserves with limits.
	\end{enumerate}
\end{lemma}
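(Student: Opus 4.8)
The plan is to reduce both statements to formal properties of $\Hom$-functors by first recasting the sections of $\MMor_S(X,Y)$. For any $S$-scheme $S'$, the universal property of the fiber product $Y_{S'}=Y\times_S S'$ shows that giving an $S'$-morphism $X_{S'}\to Y_{S'}$ is the same as giving an $S$-morphism $X_{S'}\to Y$; that is, there is a bijection
\[
\MMor_S(X,Y)(S')=\Hom_{S'}(X_{S'},Y_{S'})\;\cong\;\Hom_S(X_{S'},Y),
\]
which is natural in $S'$ (the base-change maps on the left correspond to precomposition with the projections $X_{S''}\to X_{S'}$ on the right). Since both products and limits of presheaves are computed sectionwise, it will suffice in each case to produce such a bijection of the indicated sets that is natural in $S'$.

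For part \ref{cafn}, I first note that base change preserves finite coproducts: $\bigl(\coprod_{i\in I}X_i\bigr)_{S'}\cong\coprod_{i\in I}(X_i)_{S'}$. This is the only non-formal input, and I would verify it by reducing to the affine case, where a finite disjoint union corresponds to a finite product of rings and the claim becomes the fact that $-\otimes_R R'$ commutes with finite products (equivalently, one invokes that schemes form an extensive category). Granting this, the universal property of the coproduct in $\cat{Sch}/S$ gives
\[
\MMor_S\Bigl(\coprod_{i}X_i,Y\Bigr)(S')\cong\Hom_S\Bigl(\coprod_i(X_i)_{S'},Y\Bigr)\cong\prod_i\Hom_S\bigl((X_i)_{S'},Y\bigr)\cong\prod_i\MMor_S(X_i,Y)(S').
\]
Finally, since $h_S(S')=\Hom_S(S',S)$ is a singleton for every $S$-scheme $S'$, the functor $h_S$ is terminal in $\cat{Psh}(\cat{Sch}/S)$, so the fiber product over $h_S$ on the right-hand side of the statement is just the ordinary (sectionwise) product, matching the bijection above.

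For part \ref{cafo}, the recasting above makes $\lim\caF$ appear directly in the target slot of the $\Hom$, so no further facts about base change are needed. Using the assumed existence of $\lim\caF$ in $\cat{Sch}/S$ and the defining universal property of a limit (morphisms into $\lim\caF$ are the same as compatible families of morphisms into the $\caF(i)$), I obtain, naturally in $S'$,
\[
\MMor_S(X,\lim\caF)(S')\cong\Hom_S(X_{S'},\lim\caF)\cong\lim_i\Hom_S(X_{S'},\caF(i))\cong\lim_i\MMor_S(X,\caF(i))(S').
\]
Because limits in $\cat{Psh}(\cat{Sch}/S)$ are computed sectionwise, the right-hand side is exactly $\bigl(\lim_i\MMor_S(X,\caF(i))\bigr)(S')$, yielding the desired isomorphism of presheaves.

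The routine but necessary work is the naturality of every bijection above in $S'$, which in each case holds because the bijections are instances of universal properties (of the fiber product, the coproduct, and the limit) and are therefore compatible with base change. I expect the main obstacle to be the single genuinely geometric point, namely that base change commutes with finite coproducts in part \ref{cafn}; everything else is a formal consequence of the $\Hom$-adjunction together with the sectionwise computation of limits and products of presheaves.
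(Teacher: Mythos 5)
Your proof is correct and takes essentially the same approach as the paper, whose entire proof is the one-line remark that both properties ``follow straightforwardly from the analogous properties of the bifunctor $\Hom_S(-,-)$.'' Your argument---recasting sections via the adjunction $\Hom_{S'}(X_{S'},Y_{S'})\cong\Hom_S(X_{S'},Y)$, invoking extensivity for base change of finite coproducts, and then applying the universal properties of coproducts and limits sectionwise (with $h_S$ terminal)---is exactly the intended fleshing-out of that remark.
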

\begin{proof}
	Both properties follow straightforwardly from the analogous properties of the bifunctor $\Hom_S(-,-)$.
\end{proof}

\begin{remark}
	\label{bal}
	The following is an important particular case of Lemma \ref{bak}: let $X$ be a $S$-scheme and $Y \rightarrow Y''$, $Y' \rightarrow Y''$ be morphisms of schemes over $S$. We have a canonical isomorphism of functors
	\[
	\MMor_S(X, Y \times_{Y''} Y') \cong \MMor_S(X, Y) \times_{\MMor_S(X,Y'')} \MMor_S(X, Y').
	\]
	If we suppose that all the functors of morphisms on the right hand side are representable, then we have the corresponding isomorphism of schemes of morphisms. In particular, if $Y'' = S$ we have $\Mor_S(X,S) \cong S$, therefore
	\[
	\Mor_S(X,Y \times_S Y') \cong \Mor_S(X,Y) \times_S \Mor_S(X,Y').
	\]
\end{remark}

\begin{proposition}
	\label{ban}
	Let $X$ be a separated $S$-scheme, let $i:Z \hookrightarrow Y$ be a closed immersion and $j: U \hookrightarrow Y$ be an open immersion of schemes over $S$. Then the morphisms
	\begin{align*}
	\mu_{i}: & \MMor_S(X,Z) \rightarrow \MMor_S(X,Y), \\
	\mu_{j}: & \MMor_S(X,U) \rightarrow \MMor_S(X,Y)
	\end{align*}
	make $\MMor_S(X,Z)$ and $\MMor_S(X,U)$ respectively to be a closed and an open subfunctor of $\MMor_S(X,Y)$. In particular, if $X$ and $Y$ are projective $S$-schemes and $X$ is flat over $S$, then $\MMor(X,Z)$ and $\MMor(X,U)$ are representable and the morphisms
	\begin{equation}
	\label{cadz}
	\begin{split}
	\Mor_S(X,i):& \Mor_S(X, Z) \rightarrow \Mor_S(X,Y)\\
	\Mor_S(X,j):& \Mor_S(X, U) \rightarrow \Mor_S(X,Y)
	\end{split}
	\end{equation}
	corresponding to $\mu_i$ and $\mu_j$ are a closed and an open immersion respectively.
\end{proposition}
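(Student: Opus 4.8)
The plan is to deduce both assertions from the Hilbert-scheme statements of Propositions \ref{bah} and \ref{bac} by pulling them back along the graph embeddings of Definition \ref{bai}, using that closed and open subfunctors are stable under base change. I would first record that $\mu_i$ and $\mu_j$ are monomorphisms of functors: since $i$ and $j$ are monomorphisms, so are their base changes $i_{S'}$ and $j_{S'}$, and post-composition with a monomorphism is injective on morphism sets, so each $\mu$ is injective sectionwise. (This also follows a posteriori, a base change of a monomorphism being a monomorphism.)

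For the closed case I would use that $\id_X\times_S i\colon X\times_S Z\hookrightarrow X\times_S Y$ is again a closed immersion, so by Proposition \ref{bah} the induced map $\eta_{\id\times i}\colon\HHilb_S(X\times_S Z)\hookrightarrow\HHilb_S(X\times_S Y)$ is a closed subfunctor. The key step is to show that the square
\[
\begin{tikzcd}
\MMor_S(X,Z)\ar[r,"\Gamma_Z"]\ar[d,"\mu_i"'] & \HHilb_S(X\times_S Z)\ar[d,"\eta_{\id\times i}"]\\
\MMor_S(X,Y)\ar[r,"\Gamma_Y"] & \HHilb_S(X\times_S Y)
\end{tikzcd}
\]
formed by the graph embeddings is Cartesian. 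It commutes because the graph of $i_{S'}\circ g$ is the image under $\id\times i$ of the graph of $g$. For the Cartesian property I would compute sections over $S'$: a point of the fiber product is a pair $(h,V)$ with $h\in\Hom_{S'}(X_{S'},Y_{S'})$ and $V\in\HHilb_S(X\times_S Z)(S')$ such that the image of $V$ in $(X\times_S Y)_{S'}$ is the graph $\Gamma_h$. Then $\Gamma_h$ lies in the closed subscheme $(X\times_S Z)_{S'}$ and maps isomorphically onto $X_{S'}$ under the first projection, so composing with the second projection recovers a unique $h'\colon X_{S'}\to Z_{S'}$ with $i_{S'}\circ h'=h$ and $V=\Gamma_{h'}$. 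This assignment is inverse to $\Gamma_Z$, and stability of closed subfunctors under base change then shows $\mu_i$ is a closed subfunctor.

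The open case is identical after replacing $i$ by $j$: now $\id_X\times_S j$ is an open immersion, Proposition \ref{bac} makes $\HHilb_S(X\times_S U)$ an open subfunctor, and the analogous square is Cartesian because $\Gamma_h$ lands in $(X\times_S U)_{S'}$ exactly when $h$ factors through the open subscheme $U_{S'}$; stability of open subfunctors then gives that $\mu_j$ is open. For the representability statements I would assume $X,Y$ projective and $X$ flat, so $\MMor_S(X,Y)$ is represented by $\Mor_S(X,Y)$ (Definition \ref{bai}); applying Definition \ref{bae} with $\beta=\id$ to the closed subfunctor $\mu_i$ produces a closed subscheme of $\Mor_S(X,Y)$ representing $\MMor_S(X,Z)$, whence $\Mor_S(X,i)$ is a closed immersion, and likewise $\mu_j$ yields an open subscheme together with an open immersion $\Mor_S(X,j)$. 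I would stress that the open case is where the subfunctor machinery earns its keep: $U$ is only quasi-projective, so its representability is \emph{not} furnished by the criterion of Definition \ref{bai} and must be obtained in precisely this way.

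I expect the only genuine work to be the verification that the two squares are Cartesian, namely matching the scheme-theoretic condition that the graph $\Gamma_h$ be contained in $X\times_S Z$ (resp.\ $X\times_S U$) with the condition that $h$ factor through $Z_{S'}$ (resp.\ $U_{S'}$). The accompanying bookkeeping---that the subscheme $V$ produced is flat and proper over $S'$---is immediate once one observes $V\cong X_{S'}$ via the first projection, so these properties descend from the standing hypotheses that $X$ is flat and projective over $S$; everything else is formal.
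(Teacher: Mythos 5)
Your proposal is correct and takes essentially the same route as the paper's own proof: both establish that the square formed by the graph monomorphisms $\Gamma$ and the map $\eta$ on Hilbert functors induced by $\id_X\times_S\iota$ is Cartesian (verifying commutativity by uniqueness of the graph and the Cartesian property by recovering the unique $g$ from a pair $(V,f)$ via the isomorphism $V\cong X_{S'}$ and the second projection), and then conclude via Propositions \ref{bac} and \ref{bah} together with stability of open and closed subfunctors under base change. The representability step via Definition \ref{bae} is likewise how the paper concludes, so nothing needs to be added or corrected.
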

\begin{proof}
	First we claim that for any immersion of schemes $\iota: W \hookrightarrow X$ we claim that we have a Cartesian diagram of functors
	\begin{equation}
	\label{bam}
	\begin{tikzcd}
	\MMor_S(X,W) \ar[d, hook, "\Gamma"] \ar[r, hook, "\mu_\iota"]& \MMor_S(X,Y) \ar[d, hook, "\Gamma"] \\
	\HHilb_S(X \times_S W) \ar[r, hook, "\eta_{\iota_X}"]& \HHilb_S(X \times_S  Y)
	\end{tikzcd}
	\end{equation}
	where $\eta_{\iota_X}$ is defined as in \eqref{baq} for $\iota_X : X \times_S W \hookrightarrow X \times_S Y$.
	
	Indeed, let $S'$ be a locally Noetherian $S$-scheme and $g \in \MMor_{S}(X, W)(S')$ and denote
	\[
	g' \defeq  \mu_{\iota,S'}(g) = \iota_{S'} \circ g,
	\]
	and consider the commutative diagram
	\begin{equation}
	\label{bao}
	\begin{tikzcd}
	& X_{S'} \ar[d, hook, "\Gamma(g)"'] \ar[dr, "g"] \ar[ddd, bend right= 35, shift right = 3, "\id_{X_{S'}}"'] & \\
	& (X \times_S W)_{S'} \ar[r, "\pi"] \ar[d, hook, "\iota_{X,S'}"] & W_{S'} \ar[d, hook, "\iota_{S'}"'] \\
	& (X \times_S Y)_{S'} \ar[r, "\pr_2"] \ar[d, "\pr_1"]& Y_{S'} \ar[d] \\
	& X_{S'} \ar[r] & S',
	\end{tikzcd}
	\end{equation}
	where all squares are Cartesian. To check that \eqref{bam} commutes, it suffices to check that $\Gamma(g') =  \iota_{X, S'} \circ \Gamma(g)$, but this follows from the uniqueness of the graph morphism, since 
	\[
	\pr_1 \circ \iota_{X, S'} \circ \Gamma(g) = \id_{X_{S'}} \mathand \pr_2 \circ \iota_{X, S'} \circ \Gamma(g) = \iota_{S'} \circ g = g'.
	\]	
	Moreover, we claim that \eqref{bam} is Cartesian. It suffices to show that for each $S$-scheme $S'$ and each pair
	\[
	(V, f) \in \HHilb_S(X \times_S W)(S') \times_{\HHilb_S(X \times_S Y)(S')} \MMor_S(X,Y)(S'),
	\]
	there exists a unique morphism $g: X_{S'} \rightarrow W_{S'}$ satisfying two conditions:
	\begin{enumerate}
		\item \label{cadu} the closed subscheme defined by the closed immersion
		\[
		\eta_{\iota_X, S'}(\Gamma(g)) = \iota_{X,S'} \circ \Gamma(g): X_{S'} \rightarrow (X \times_S Y)_{S'}
		\]
		is the same as the one defined by
		\[
		V \hookrightarrow (X \times_S W)_{S'} \overset{\iota_{X,S'}}{\hookrightarrow} (X \times_S Y)_{S'};
		\]
		
		\item \label{cadv} $\mu_{\iota,S'}(g) = \iota_{S'} \circ g = f$.
	\end{enumerate}
	
	Notice that by definition of the pair $(V,f)$, we have that the closed subscheme defined by the composition
	\[
	V \hookrightarrow (X \times_S W)_S' \overset{\iota_{X,S'}}{\hookrightarrow} (X \times_S Y)_{S'}
	\]
	is the same as the one defined by $\Gamma(f): X_{S'} \hookrightarrow (X \times_S Y)_{S'}$. Therefore, we have an isomorphism $V \cong X_{S'}$. Define $g$ as the composition
	\[
	g: X_{S'} \xrightarrow{\cong} V \hookrightarrow (X \times_S W)_{S'} \xrightarrow{\pi} W_{S'}.
	\]
	By the definition of graph morphism, we obtain a commutative diagram \eqref{bao}. In other words, $g$ satisfies the conditions \ref{cadu} and \ref{cadv} above. To see that $g$ is unique, just notice that if $h$ was another morphism satisfying condition \ref{cadv}, we would have $\iota_{S'} \circ g = \iota_{S'} \circ h$, thus $g = h$, since immersions are monomorphisms in the category of schemes.
	
	To complete the proof just let $W = U$ and $\iota = j$ (resp. $W=Z$ and $\iota=i$) and by Proposition \ref{bac} (resp. \ref{bah}), we have $\mu_\iota$ is an open (resp. closed) subfunctor.
\end{proof}

\begin{proof}[Proof of Theorem \ref{aad}]
	For any quasi-projective $S$-scheme $Y$ we have an open immersion $Y \hookrightarrow Y'$ where $Y'$ is projective. By Proposition \ref{ban}, $\MMor_S(X,Y)$ is representable by an open subscheme of $\Mor_S(X,Y')$, hence the functor
	\[
	\Mor_S(X, -): \cat{QProj}/S \rightarrow \cat{Sch}/S
	\]
	is well defined. Preservation of limits, open and closed immersions follow from Lemma \ref{bak} and Proposition \ref{ban}.
\end{proof}

\section{Rational curves on blow-ups at points}
\label{daa}

Let $Y$ be a projective scheme over an algebraically closed field $k$, let $C$ be an irreducible curve over $k$ and let $\sigma: \Bl_Z(Y) \rightarrow Y$ be the blow-up of $Y$ along a closed subscheme $Z$ with exceptional divisor $E$. Consider the induced morphism
\[
\sigma_M \defeq \Mor(C, \sigma): \Mor(C, \Bl_Z(Y)) \rightarrow \Mor(C, Y),
\] and the open subschemes $\sigma^{-1}(Y \smallsetminus Z)$ and $Y \smallsetminus Z$. Recall that
\[
\sigma|_{\sigma^{-1}(Y \smallsetminus Z)}: \sigma^{-1}(Y \smallsetminus Z) \rightarrow Y \smallsetminus Z
\]
is an isomorphism. Therefore, we have a commutative diagram
\[
\begin{tikzcd}
\sigma^{-1}(Y \smallsetminus Z) \ar[r, "\sim"] \ar[d, hook] & Y \smallsetminus Z \ar[d, hook] \\
\Bl_Z(Y) \ar[r, "\sigma"] & Y.
\end{tikzcd}
\] 
If we apply the functor $\Mor(C, -)$ we obtain the following commutative diagram
\[
\begin{tikzcd}
\Mor(C, \sigma^{-1}(Y \smallsetminus Z)) \ar[r, "\sim"] \ar[d, hook] & \Mor(C , Y \smallsetminus Z) \ar[d, hook] \\
\Mor(C, \Bl_Z(Y)) \ar[r, "\sigma_M"] & \Mor(C, Y),
\end{tikzcd}
\]
where the top row is also an isomorphism and the vertical arrows are open immersions by Proposition \ref{ban}. In other words, $\sigma_M|_{\Mor(C, \sigma^{-1}(Y \smallsetminus Z))}$ is an isomorphism between open subschemes of $\Mor(C, \Bl_Z(Y))$ and $\Mor(C, Y)$.

Notice also that $\Mor(C, Z) \hookrightarrow \Mor(C, Y)$ is a closed immersion and therefore $\Mor(C, Y) \smallsetminus \Mor(C, Z)$ is an open subscheme of $\Mor(C, Y)$ parametrizing morphisms $C \rightarrow Y$ whose images intersect the open subset $Y \smallsetminus Z$. Lastly, notice that by Remark \ref{bal} we have
\[
\sigma_M^{-1}(\Mor(C, Z)) \cong \Mor(C,E).
\]

\begin{proposition}
	\label{dab}
	Let $C$ be a non-singular projective curve, $Y$ be a projective scheme over $k$, $Z$ be a closed subscheme of $Y$ and $\sigma: \Bl_Z(Y) \rightarrow Y$ be the blow-up of $Y$ along $Z$. Let
	\[
	\sigma_M: \Mor(C, \Bl_Z(Y)) \rightarrow \Mor(C, Y)
	\]
	be the induced morphism and let
	\[
	N \defeq \Mor(C, Y) \smallsetminus \Mor(C,Z)
	\]
	and
	\[
	N' \defeq  \sigma_M^{-1}(N) = \Mor(C, \Bl_Z(Y)) \smallsetminus \Mor(C, E).
	\]
	Then the restriction
	\[
	\sigma_M|_{N'}: N' \rightarrow N
	\]
	is locally quasi-finite. More specifically, it is a bijection on $k$-points. 
\end{proposition}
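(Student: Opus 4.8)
The plan is to analyze the map fiberwise over $k$-points, exploiting that $C$ is a smooth curve and that $\sigma$ is proper. First I would describe the $k$-points of $N$: such a point is a morphism $g \colon C \to Y$ whose image is not contained in $Z$. Since $C$ is an irreducible smooth projective curve, $g^{-1}(Z) \subsetneq C$ is a proper closed subset, hence a finite set of closed points. On the dense open $U_C \defeq C \smallsetminus g^{-1}(Z)$ the morphism $g$ factors through $Y \smallsetminus Z$, over which $\sigma$ restricts to an isomorphism, as recalled just before the proposition. Consequently any lift $\tilde g \colon C \to \Bl_Z(Y)$ of $g$ is forced to coincide on $U_C$ with the composite $U_C \xrightarrow{g} Y \smallsetminus Z \xrightarrow{\sim} \sigma^{-1}(Y\smallsetminus Z) \hookrightarrow \Bl_Z(Y)$; call this morphism $\tilde g_U$.

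Next I would establish the bijection on $k$-points in two steps. For injectivity, two lifts of $g$ agree on the dense open $U_C$, and since $\Bl_Z(Y)$ is separated and $C$ is reduced, they agree on all of $C$. For existence, I would view $\tilde g_U$ as a rational map $C \dashrightarrow \Bl_Z(Y)$; because $C$ is a smooth curve and $\Bl_Z(Y)$ is proper over $k$, this rational map extends to a unique morphism $\tilde g \colon C \to \Bl_Z(Y)$, applying the valuative criterion of properness to the discrete valuation rings $\caO_{C,p}$ at the finitely many points $p \in g^{-1}(Z)$. One checks $\sigma \circ \tilde g = g$ again by density of $U_C$ and separatedness of $Y$, and that $\tilde g(C) \not\subseteq E$ since $\tilde g(U_C) \subseteq \Bl_Z(Y) \smallsetminus E$; hence $\tilde g$ is a $k$-point of $N'$ mapping to $g$. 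Conversely every $k$-point of $N'$ maps into $N$, so $\sigma_M|_{N'}$ is a bijection on $k$-points.

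Finally I would deduce local quasi-finiteness from this. Both $N$ and $N'$ are locally of finite type over $k$, so $\sigma_M|_{N'}$ is locally of finite type, and it remains to see that all fibers are discrete. The fiber over any $k$-point of $N$ is a scheme locally of finite type over $k$ having at most one $k$-point by the bijection just proved; since $k$ is algebraically closed, such a scheme is Jacobson with closed points equal to its $k$-points, hence is either empty or supported at a single point, so it is $0$-dimensional. By upper semicontinuity of fiber dimension together with the Jacobson property of $N$ — so that the locus of positive fiber dimension, were it nonempty, would contain a closed point and hence a $k$-point, contradicting the previous sentence — every fiber is $0$-dimensional. Therefore $\sigma_M|_{N'}$ is locally quasi-finite.

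I expect the existence half of the bijection to be the main obstacle: producing the lift $\tilde g$ across the finitely many points of $g^{-1}(Z)$. This is exactly where properness of $\sigma$ (blow-ups of projective schemes are projective, hence proper) and smoothness of $C$ enter, through the valuative criterion. The passage from bijectivity on $k$-points to quasi-finiteness over all points is the only other delicate step, and it is handled by the semicontinuity-plus-Jacobson argument above.
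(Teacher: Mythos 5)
Your proposal is correct and takes essentially the same approach as the paper: both lift a $k$-point $f \in N$ through the isomorphism $\sigma^{-1}(Y \smallsetminus Z) \xrightarrow{\sim} Y \smallsetminus Z$ to a rational map $C \dashrightarrow \Bl_Z(Y)$, extend it to a unique regular morphism using non-singularity of $C$ and properness of the blow-up, and then deduce local quasi-finiteness from the resulting bijection on $k$-points via finite-type/Jacobson-density considerations. Your valuative-criterion and fiber-dimension-semicontinuity details merely make explicit what the paper states tersely (one small repair: the Jacobson property you invoke should be that of the source $N'$, where the Chevalley locus of positive fiber dimension is closed, not of $N$).
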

\begin{proof}
	Any $k$-point $[f] \in N$ corresponds to a morphism $f: C \rightarrow Y$ whose image intersects the open $Y \smallsetminus Z$. Since $\sigma|_{\sigma^{-1}(Y \smallsetminus Z)}$ is an isomorphism, there is a rational map $g: C \dashrightarrow \Bl_Z(Y)$ such that the diagram
	\[
	\begin{tikzcd}
	& \Bl_Z(Y) \ar[d, "\sigma"]\\
	C \ar[r, "f"] \ar[ru, dashed, "g"] & Y
	\end{tikzcd}
	\]
	is commutative. Since $C$ is nonsingular, $g$ is the unique regular morphism making the diagram commutative. This is equivalent to say that the fiber of the morphism $\sigma_M$ at the point $[f]$ has a unique $k$-point $[g]$.
	
	Let $\sigma^{-1}_M([f])$ denote this fiber, since it is locally of finite type, its set of $k$-points is dense and we conclude that $\sigma^{-1}_M([f])$ consists of the single point $[g]$ and hence $\sigma_{M}|_{N'}$ is a bijection on $k$-points.
	
	To see it is locally quasi-finite, recall that any irreducible component of $N'$ and of $N$ is of finite type. Let $N'_0 \subset N'$ be an irreducible component. The restriction $\sigma_M|_{N'_0}$ is bijective on $k$-points between schemes of finite type, therefore it is quasi-finite.
\end{proof}

With the notation as in Proposition \ref{dab} consider the open subscheme 
\[
U \defeq \Mor(C, \sigma^{-1}(Y \smallsetminus Z))
\]
contained in $N'$. Then the complement $N'' \defeq N' \smallsetminus U$ parametrizes morphisms from $C$ to $\Bl_Z(Y)$ whose image intersects the exceptional divisor $E$ properly.

Proposition \ref{dab} gives us a hint for the behaviour of components in $N''$. In fact, it tells us that if $N_0$ is an irreducible component of $N$, then the preimage $N'_0 \defeq \sigma^{-1}_M(N_0)$ can be roughly understood as a ``splitting'' of $N_0$. Indeed, the theorem of dimension of fibers (applied to the reduction of these schemes) implies that there exists a unique component $N'_1 \subset N'_0$ dominating $N_0$ and such that $\dim N'_1 = \dim N_0$, see for instance \cite[Proposition 5.5.1]{mustata-AG}. However, since $N'_0$ might not be irreducible, it may split in many other components of dimension strictly smaller than $\dim N_0$.

Notice that if $U \cap N'_0 \neq \emptyset$ it is isomorphic to a dense open subscheme in $N_0$, then the underlying topological space of the component $N'_1$ will be the closure of $U \cap N'_0$ in $N'_0$. Therefore, we have that any non-empty irreducible component of
\[
N''_0 \defeq N'_0 \cap (N' \smallsetminus U) \subset N''
\]
has dimension strictly less than $\dim N_0$. The behaviour of components just described is illustrated in the fibered diagram
\[
\begin{tikzcd}
\emptyset \ar[r,  "\large \cong" description] \ar[d] &[-30pt] N''_0 \cap N'_1  \ar[r, hook] \ar[d, hook] & N''_0 \ar[d, hook] \ar[r, hook] & N' \cap N'' \ar[d, hook] \ar[r, hook] & N'' \ar[d, hook] \\
U \cap N_0' \ar[r, hook] &[-30pt] N'_1  \ar[r, hook] \ar[rd, "\text{dominant}" description, near start] & N'_0 \ar[d, "\text{bijective}" description] \ar[r, hook] & N' \ar[d, "\text{bijective}" description] \ar[r, hook] & \Mor(C, \Bl_Z(Y)) \ar[d, "\sigma_M"] \\
& & N_0 \ar[r, hook] & N \ar[r, hook] & \Mor(C, Y).
\end{tikzcd}
\]
We refine this description significantly when $C = \PR^1$, $Y=\PR^n$ and $Z$ is a finite collection of points.

Consider $p = (1:0:\dotsb :0)$ and let $\sigma: \Bl_p(\PR^n) \rightarrow \PR^n$ be the blow-up of $\PR^n$ at $p$. Recall that $\Bl_p(\PR^n)$ can be defined as a closed subscheme of the product $\PR^n \times \PR^{n-1}$. Explicitly, if we define coordinates $(x_0: \dotsb : x_n)$ for $\PR^n$ and $(y_1: \dotsb: y_n)$ for $\PR^{n-1}$, the closed subscheme $\Bl_p(\PR^n)$ is given by the equations
\[
\{x_iy_j = y_jx_i \} \text{ for } 1 \le i,j \le n
\]
in $\PR^n \times \PR^{n-1}$. Therefore, we have the commutative diagram
\begin{equation}
\label{caey}
\begin{tikzcd}
\Bl_p(\PR^n) \ar[dr, hook] \ar[ddr, bend right=10, "\sigma"'] \ar[rrd, bend left =10, "\tau"]&[-10pt] & \\[-10pt]
& \PR^n \times \PR^{n-1} \ar[r] \ar[d] & \PR^{n-1} \ar[d] \\
& \PR^n \ar[r] & \Spec k.
\end{tikzcd}
\end{equation}
Recall that
\[
\Hom_k(\PR^1, \PR^n \times \PR^{n-1}) \cong \Hom_k(\PR^1, \PR^n) \times \Hom_k(\PR^1, \PR^{n-1}),
\]
that is, any morphism $\PR^1 \rightarrow \PR^n \times \PR^{n-1}$ corresponds uniquely to a pair of tuples
\begin{equation}
\label{dae}
\left((F_0:\dotsb :F_n), (G_1: \dotsb : G_n) \right)
\end{equation}
such that the collections $\{F_i\}$ and $\{G_j\}$ consist each of forms with no factors in common and the same degree (among each tuple). Thus a morphism $f: \PR^1 \rightarrow \Bl_p(\PR^n)$ corresponds uniquely to tuples \eqref{dae} that also satisfy
\begin{equation}
\label{daf}
F_iG_j = F_jG_i \text{ for } 1 \le i, j \le n.
\end{equation}

\begin{lemma}
	\label{dam}
	Let $\{p_1, \dotsc, p_r\}$ be a finite collection of distinct points in $\PR^n$. Let $\sigma: Y \rightarrow \PR^n$ be the blow-up of $\PR^n$ at $\{p_1, \dotsc, p_r\}$ with exceptional divisor $E$ and let $\Bl_{p_i}(\PR^n) \rightarrow \PR^n$ be the blow-up of $\PR^n$ with exceptional divisor $E_i$ for each $i$. Then, there exist closed immersions
	\begin{equation*}
	\iota: Y \hookrightarrow \Bl_{p_1}(\PR^n) \times_{\PR^n} \dotsb \times_{\PR^n} \Bl_{p_r}(\PR^n) \mathand E \hookrightarrow E_1 \times_{\PR^n} \dotsb \times_{\PR^n} E_r
	\end{equation*}
	such that $\sigma$ factorizes through the fibered diagram
	\[
	\begin{tikzcd}
	E \ar[r, hook] \ar[d, hook] & Y \ar[d, hook, "\iota"] \\
	E_1 \times_{\PR^n} \dotsb \times_{\PR^n} E_r \ar[d] \ar[r, hook] & \Bl_{p_1}(\PR^n) \times_{\PR^n} \dotsb \times_{\PR^n} \Bl_{p_r}(\PR^n) \ar[d] \\
	\{p_1, \dotsc, p_r\} \ar[r, hook] & \PR^n.
	\end{tikzcd}
	\]
\end{lemma}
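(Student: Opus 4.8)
The plan is to construct the closed immersion $\iota$ using the universal property of the fiber product, and then verify that it is in fact a closed immersion by comparing defining equations inside a common ambient product of projective spaces. First I would recall that blowing up $\PR^n$ at the finite set $\{p_1,\dotsc,p_r\}$ is the same as successively blowing up each point, and that since the $p_i$ are distinct, the exceptional divisors $E_i$ are disjoint in $Y$. The identity morphisms $Y \to \Bl_{p_i}(\PR^n)$ over $\PR^n$ do not exist directly; rather, for each $i$ there is a natural morphism $Y \to \Bl_{p_i}(\PR^n)$ over $\PR^n$ obtained because blowing up at all $r$ points factors through blowing up at the single point $p_i$ (the ideal of $\{p_1,\dotsc,p_r\}$ contains the ideal of $p_i$, so by the universal property of blow-ups the map $\sigma$ lifts to $\Bl_{p_i}(\PR^n)$). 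Bundling these $r$ lifts together via the universal property of the fiber product over $\PR^n$ yields a canonical morphism
\[
\iota: Y \longrightarrow \Bl_{p_1}(\PR^n) \times_{\PR^n} \dotsb \times_{\PR^n} \Bl_{p_r}(\PR^n).
\]
This is the map I would take; the corresponding map on exceptional loci restricts $\iota$ to $E$, landing in $E_1 \times_{\PR^n} \dotsb \times_{\PR^n} E_r$ because $E = \coprod E_i$ maps into each factor as an exceptional divisor.

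Next I would show $\iota$ is a closed immersion. Since everything is projective over $k$ (indeed over $\PR^n$), it suffices to show $\iota$ is a monomorphism that is proper, or more concretely that it is a closed immersion locally over $\PR^n$. The cleanest route is to work over the affine chart of $\PR^n$ near each $p_i$ and use the explicit description from \eqref{caey}: each $\Bl_{p_i}(\PR^n)$ sits inside $\PR^n \times \PR^{n-1}$ cut out by the bilinear equations $x_a^{(i)} y_b^{(i)} = x_b^{(i)} y_a^{(i)}$ recentered at $p_i$. The fiber product over $\PR^n$ then sits inside $\PR^n \times (\PR^{n-1})^r$, and $Y$ is the closed subscheme of this product cut out by the union of all these equations. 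I would verify that the scheme $Y$, realized as the closure of the graph of the rational map $\PR^n \dashrightarrow (\PR^{n-1})^r$ given by the $r$ projections away from the $p_i$, coincides scheme-theoretically with the image of $\iota$; away from the points the map is an isomorphism onto the graph, and over each $p_i$ the disjointness of the exceptional loci guarantees the fibers match.

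The main obstacle I expect is the scheme-theoretic (as opposed to merely set-theoretic) verification that $\iota$ is a \emph{closed} immersion and that the displayed diagram is genuinely \emph{Cartesian}, i.e.\ that $E$ is exactly the scheme-theoretic preimage of $E_1 \times_{\PR^n}\dotsb\times_{\PR^n} E_r$ under $\iota$, not just a set-theoretic equality. Over the locus away from all $p_i$ this is immediate since $\iota$ is an isomorphism there. The delicate point is the local structure over each $p_i$: I would check on the standard affine charts that the ideal generated by the pulled-back equations of the product of exceptional divisors cuts out precisely the reduced exceptional divisor $E_i$ in $Y$, using that blowing up at distinct points does not interfere with the local model of a single-point blow-up near each $p_i$. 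This reduces to a direct local computation in the blow-up charts, which is routine once the disjointness of the $E_i$ and the factorization of $\sigma$ through each $\Bl_{p_i}(\PR^n)$ are in hand, so I would state it and relegate the coordinate bookkeeping to the verification over the affine charts centered at each $p_i$.
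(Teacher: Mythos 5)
Your proposal is correct in outline but takes a genuinely different route from the paper. The paper's proof is a three-line induction on the number of points: setting $Y_i \defeq \Bl_{p_1,\dotsc,p_i}(\PR^n)$, it invokes \cite[Subsection B.6.9]{fulton-IT} --- for $f\colon X' \to X$ and a center $Z \subset X$ there is a canonical closed embedding $\Bl_{f^{-1}(Z)}(X') \hookrightarrow X' \times_X \Bl_Z(X)$ compatible with exceptional divisors --- applied to $Y_i \to \PR^n$ and $Z = \{p_{i+1}\}$, and then composes the resulting closed immersions $Y = Y_r \hookrightarrow Y_{r-1}\times_{\PR^n}\Bl_{p_r}(\PR^n) \hookrightarrow \dotsb \hookrightarrow \Bl_{p_1}(\PR^n)\times_{\PR^n}\dotsb\times_{\PR^n}\Bl_{p_r}(\PR^n)$. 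You instead build $\iota$ by hand from universal properties and verify closed-immersedness in charts; the citation in the paper absorbs exactly the chart computation you defer. Your route is viable and in fact yields something stronger, and more cheaply than via your graph-closure identification: since fiber products localize over the base, cover $\PR^n$ by $U_0 = \PR^n \smallsetminus \{p_1,\dotsc,p_r\}$ and by neighborhoods $U_i \ni p_i$ avoiding the other points; over $U_i$ each factor $\Bl_{p_j}(\PR^n)$ with $j \neq i$ maps isomorphically to the base, so the fiber product restricted over $U_i$ is just $\Bl_{p_i}(U_i) = Y|_{U_i}$ and $\iota$ is an isomorphism there. Hence for distinct points $\iota$ is an isomorphism onto the entire fiber product, not merely a closed immersion, and the ``routine local computation'' you relegate becomes essentially trivial. (Your graph-closure description of $Y$ is also correct, but it silently uses that $Y$ is the blow-up of the product ideal $I_{p_1}\dotsm I_{p_r}$, which itself needs the distinctness of the points.)

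Two slips need fixing. First, your justification for the lifts $Y \to \Bl_{p_i}(\PR^n)$ is stated backwards and misidentifies the criterion: the ideal of $\{p_1,\dotsc,p_r\}$ is \emph{contained in} the ideal of $p_i$, not the other way around, and in any case a containment of ideals does not by itself produce a lift. The universal property requires that the inverse image ideal sheaf $I_{p_i}\cdot\mathcal{O}_Y$ be invertible on $Y$, which holds because near $E_i$ it agrees with the invertible ideal of the full center (the other factors being units there) and away from $p_i$ it is the unit ideal. Second, your claim that $E = \coprod_i E_i$ ``maps into each factor as an exceptional divisor'' is false for the off-diagonal factors: for $j \neq i$ the component $E_i$ maps to the single point of $\Bl_{p_j}(\PR^n)$ lying over $p_i$, which is not in $E_j$. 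Accordingly the literal fiber product $E_1\times_{\PR^n}\dotsb\times_{\PR^n}E_r$ is empty for $r \ge 2$, since the $E_i$ lie over distinct points of $\PR^n$; this abuse of notation is already present in the lemma's statement, and the object actually used later in the paper is the scheme-theoretic preimage of $\{p_1,\dotsc,p_r\}$ in the fiber product, which is isomorphic to $\coprod_i E_i$. Your verification that the upper square is Cartesian should therefore be phrased against that preimage, where it goes through by the same localization over the $U_i$.
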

\begin{proof}
	Let $Y_i \defeq \Bl_{p_1, \dotsc, p_i}(\PR^n)$ for $i \le r$ and let $E_{Y_i}$ be exceptional divisor of $Y_i$. It follows from \cite[Subsection B.6.9]{fulton-IT} that there is a fibered diagram
	\[
	\begin{tikzcd}
	E_{Y_{i+1}} \ar[r, hook] \ar[d, hook] & Y_{i+1} \ar[d, hook] \\ 
	E_{Y_i} \times_{\PR^n} E_{i+1} \ar[r,hook] \ar[d] & Y_i \times_{\PR^n} \Bl_{p_{i+1}}(\PR^n) \ar[d] \\
	\{p_1, \dotsc, p_{i+1}\} \ar[r,hook] & \PR^n
	\end{tikzcd}
	\]
	where the hooked arrows denote closed immersions. The induced closed immersion
	\[
	\iota: Y = Y_r \hookrightarrow Y_{r-1} \times_{\PR^n} \Bl_{p_r}(\PR^n) \hookrightarrow \dotsb \hookrightarrow \Bl_{p_1}(\PR^n) \times_{\PR^n} \dotsb \times_{\PR^n} \Bl_{p_r}(\PR^n) 
	\]
	satisfies the conditions of the statement.
\end{proof}

\begin{proof}[Proof of Theorem \ref{aac}]
	For this proof we will use the following notational convention: for any morphism of schemes $\alpha: X \rightarrow Y$ over $k$, we denote
	\[
	\alpha_M: \Mor(\PR^1, X) \rightarrow \Mor(\PR^1, Y)
	\]
	to be the corresponding morphism on schemes of morphisms.
	\vskip 0.2\baselineskip
	\noindent {\bf Case $r=1$:}
	\vskip 0.2\baselineskip
	We denote $p\defeq p_1=(1:0: \dotsb :0)$ and $Y = \Bl_p(\PR^n)$. Let $\sigma$ and $\tau$ be morphisms fitting the diagram \eqref{caey}. We can describe the scheme $\Mor(\PR^1, Y)$ by looking at the fibers of $\sigma_M$, that is, we start by noticing the partition
	\begin{equation}
	\label{dan}
	\Mor(\PR^1, \Bl_p(\PR^n)) = \coprod_{d \in \NN} \sigma_M^{-1}(\Mor_d(\PR^1, \PR^n)),
	\end{equation}
	where $\sigma^{-1}_M(\Mor_d(\PR^1, \PR^n))$ denotes the scheme theoretic inverse image of $\Mor_d(\PR^1, \PR^n)$ under $\sigma_M$. We can further partition \eqref{dan} using $\tau_M$. Indeed, for each $d \ge 0$, we have
	\begin{equation}
	\label{ccao}
	\sigma_M^{-1}\left(\Mor_d(\PR^1, \PR^n)\right) = \coprod_{e\ge 0} \sigma_M^{-1}\left(\Mor_d(\PR^1, \PR^n)\right) \cap \tau_M^{-1}(\Mor_{e}(\PR^1, \PR^{n-1})),
	\end{equation}
	where
	\[
	\sigma_M^{-1}\left(\Mor_d(\PR^1, \PR^n)\right) \cap \tau_M^{-1}(\Mor_{e}(\PR^1, \PR^{n-1}))
	\]
	denotes the scheme theoretic intersection fitting in the fibered diagram:
	\[
	\begin{tikzcd}
	\sigma_M^{-1}\left(\Mor_d(\PR^1, \PR^n)\right) \cap \tau_M^{-1}(\Mor_{e}(\PR^1, \PR^{n-1})) \ar[d, hook] \ar[r, hook] & \tau_M^{-1}(\Mor_{e}(\PR^1, \PR^{n-1})) \ar[d, hook] \\
	\sigma_M^{-1}\left(\Mor_d(\PR^1, \PR^n)\right) \ar[r, hook] & \Mor(\PR^1, \Bl_p(\PR^n)).
	\end{tikzcd}
	\]
	
	\begin{claimcounted}
		\label{dag} We have 
		\begin{align*}
		\sigma_M^{-1}(\Mor_0(\PR^1, \PR^n)) \cong \Mor_0(\PR^1, Y) \amalg \Mor_{>0}(\PR^1, E)
		\end{align*}
	\end{claimcounted}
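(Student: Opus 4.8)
The plan is to exploit the explicit equations \eqref{daf} together with the degree decomposition \eqref{ccao}. First I would specialise \eqref{ccao} to $d=0$, writing
\[
\sigma_M^{-1}(\Mor_0(\PR^1,\PR^n)) = \coprod_{e\ge 0} P_e, \qquad P_e \defeq \sigma_M^{-1}(\Mor_0(\PR^1,\PR^n)) \cap \tau_M^{-1}(\Mor_e(\PR^1,\PR^{n-1})).
\]
Since each $\tau_M^{-1}(\Mor_e(\PR^1,\PR^{n-1}))$ is the preimage of an open and closed subscheme of $\Mor(\PR^1,\PR^{n-1})$, this is a decomposition into open and closed subschemes. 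Hence it suffices to identify $P_0$ with $\Mor_0(\PR^1,Y)$ and $\coprod_{e>0}P_e$ with $\Mor_{>0}(\PR^1,E)$, and then regroup.

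For a uniform description of the pieces I would use $\iota_M=(\sigma_M,\tau_M)$. By Remark~\ref{bal} and Proposition~\ref{ban} applied to the closed immersion $\iota\colon Y\hookrightarrow\PR^n\times\PR^{n-1}$, the map $\iota_M$ realizes $\Mor(\PR^1,Y)$ as a closed subscheme of $\Mor(\PR^1,\PR^n)\times\Mor(\PR^1,\PR^{n-1})$ cut out, on the tautological tuples \eqref{dae}, by the relations \eqref{daf}. Consequently, using $\Mor_0(\PR^1,\PR^n)\cong\PR^n$, each $P_e$ is the closed subscheme of $\PR^n\times\Mor_e(\PR^1,\PR^{n-1})$ defined by $F_iG_j=F_jG_i$ for $1\le i,j\le n$, where $(F_0:\dotsb:F_n)$ are the coordinates on $\PR^n$ and $(G_1:\dotsb:G_n)$ is the tautological family of degree-$e$ forms.

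For $e=0$ the forms $G_j$ are the constant coordinates $y_j$ of $\Mor_0(\PR^1,\PR^{n-1})\cong\PR^{n-1}$, and the relations $F_iy_j=F_jy_i$ are exactly the equations of $\Bl_p(\PR^n)\subset\PR^n\times\PR^{n-1}$ from \eqref{caey}; hence $P_0\cong Y\cong\Mor_0(\PR^1,Y)$, the constant morphisms. For $e>0$ I would work on the chart $F_0\neq 0$ with affine coordinates $F_1,\dotsc,F_n$ and local coordinates on $\Mor_e(\PR^1,\PR^{n-1})$. There the relations generate an ideal $I\subseteq\mathfrak{m}\defeq(F_1,\dotsc,F_n)$, and the goal is the scheme-theoretic equality $I=\mathfrak{m}$ near $\{p\}\times\Mor_e(\PR^1,\PR^{n-1})$, since this identifies $P_e$ with $\{p\}\times\Mor_e(\PR^1,\PR^{n-1})\cong\Mor_e(\PR^1,E)$ via the isomorphism $\tau|_E\colon E\xrightarrow{\sim}\PR^{n-1}$.

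By Nakayama this equality reduces to showing that the linear parts of the generators $F_iG_j-F_jG_i$ span $\mathfrak{m}/\mathfrak{m}^2$ at every point of $\Mor_e(\PR^1,\PR^{n-1})$. Reading off the coefficient of each monomial in $u,v$, the span fails to be all of $\mathfrak{m}/\mathfrak{m}^2$ exactly when there is a nonzero vector $w=(w_1,\dotsc,w_n)$ with $w_iG_j=w_jG_i$ for all $i,j$, i.e. when the $G_j$ are all proportional to a single form. This is impossible for $e>0$: such a proportionality would force a common factor of positive degree among the $G_j$, contradicting that $(G_1:\dotsb:G_n)$ defines a morphism $\PR^1\to\PR^{n-1}$ of degree $e$. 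Thus $I=\mathfrak{m}$, giving $P_e\cong\Mor_e(\PR^1,E)$, and regrouping the pieces yields the claim. I expect the main obstacle to be precisely this last step: upgrading the evident set-theoretic statement that $\sigma\circ g$ constant with $g$ non-constant forces the image of $g$ into $E$, to the scheme-theoretic equality $I=\mathfrak{m}$, for which the no-common-factor condition, recast as the rank statement above and closed off by Nakayama, is the essential input.
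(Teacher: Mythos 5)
Your proposal is correct, but it takes a genuinely different route from the paper's. The paper's own proof is purely functorial: since $E = Y \times_{\PR^n} \{p\}$ scheme-theoretically, Remark~\ref{bal} (preservation of fiber products by $\MMor(\PR^1,-)$, via Proposition~\ref{ban}) immediately yields a Cartesian square identifying $\sigma_M^{-1}([f]) \cong \Mor(\PR^1,E) \cong \coprod_{e \ge 0}\Mor_e(\PR^1,\PR^{n-1})$, where $[f]$ is the constant-at-$p$ point, while the part of $\sigma_M^{-1}(\Mor_0(\PR^1,\PR^n))$ lying over constants with value away from $p$ consists of constant morphisms because $\sigma$ is an isomorphism there; no equations appear, and the reduced structure of the fiber comes for free from the scheme-theoretic equality $\sigma^{-1}(p)=E$. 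You instead re-derive this by hand: you specialize \eqref{ccao} to $d=0$ (the paper uses \eqref{ccao} only for $d>0$), realize $\Mor(\PR^1,Y)$ inside the product via the relations \eqref{daf}, recover the blow-up equations for $P_0$ — a pleasant observation, since $P_0 \cong \Bl_p(\PR^n) \cong \Mor_0(\PR^1,Y)$ then holds essentially by the definition of $Y$ in \eqref{caey} — and prove $P_e \cong \{p\}\times\Mor_e(\PR^1,\PR^{n-1})$ scheme-theoretically by the linear-parts/Nakayama argument; your annihilator computation (a nonzero $w$ with $w_iG_j = w_jG_i$ forces a common factor of positive degree) is exactly right, and it simultaneously gives the set-theoretic support statement needed to work near $\{p\}\times\Mor_e(\PR^1,\PR^{n-1})$. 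What your route buys is an explicit, coordinate-level verification that the fiber carries its reduced structure, at the cost of two details you assert but should spell out: (i) that $\Mor(\PR^1,Y)$ is cut out in $\Mor_d(\PR^1,\PR^n)\times\Mor_e(\PR^1,\PR^{n-1})$ scheme-theoretically by the coefficients of the $F_iG_j - F_jG_i$ — this requires both that the ideal of $\Bl_p(\PR^n)$ in $\PR^n\times\PR^{n-1}$ is generated by these bilinear forms and a base-change argument identifying the vanishing of sections of $\caO_{\PR^1_T}(d+e)$ with the vanishing of their coefficients in $\caO(T)$; and (ii) in the Nakayama step one must evaluate the $G_j$ at the point $x$, i.e.\ work modulo $\mathfrak{m}_x\cdot\mathfrak{m}$ with $\mathfrak{m}_x$ the maximal ideal at $x$ rather than modulo $\mathfrak{m}^2$, since the coefficients of $G_j - G_j^0$ vanish at $x$ but do not lie in $\mathfrak{m} = (F_1,\dotsc,F_n)$; Nakayama is then applied to the finitely generated $\caO_x$-module $(\mathfrak{m}/I)_x$. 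Both fixes are routine, so your argument stands as a more explicit, equation-based alternative to the paper's one-line appeal to preservation of fiber products.
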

	\begin{subproof}
		Let $f:\PR^1 \rightarrow \PR^n$ be the constant morphism sending $\PR^1$ to $p$ and let $[f]$ be the corresponding point in $\Mor_0(\PR^1, \PR^n)$. Notice that $\Mor(\PR^1, \{p\}) = [f]$ and by Remark \ref{bal}, we have a Cartesian square
		\[
		\begin{tikzcd}
		\Mor(\PR^1, E) \ar[r, hook] \ar[d] & \Mor(\PR^1, \Bl_p(\PR^n)) \ar[d, "\sigma_M"] \\
		\left[ f \right]  \ar[r, hook] & \Mor(\PR^1, \PR^n),
		\end{tikzcd}
		\]
		that is
		\[
		\sigma_M^{-1}([f]) \cong\Mor(\PR^1, E) \cong \Mor(\PR^1, \PR^{n-1}) \cong \coprod_{e \ge 0} \Mor_e(\PR^1, \PR^{n-1}).
		\]
		
		Moreover, clearly the underlying topological space of $\Mor_0(\PR^1, Y)$ is given by $\Mor_0(\PR^1, \sigma^{-1}(\PR^n\smallsetminus \{p\})) \cup \Mor_0(\PR^1, E)$.
	\end{subproof}
	
	Suppose $d > 0$. Then for any point $[g]$ in $\sigma_M^{-1}\left(\Mor_d(\PR^1, \PR^n)\right)$ the morphism $g$ corresponds uniquely to a pair of tuples \eqref{dae} satisfying \eqref{daf} and such that $\deg F_i = d$.
	
	\begin{claimcounted}
		\label{dabe}
		Let $d$ and $e$ be positive integers. The following hold:
		\begin{enumerate}
			\item \label{bar} If $e > d$, then $\sigma_M^{-1}\left(\Mor_d(\PR^1, \PR^n)\right) \cap \tau_M^{-1}(\Mor_{e}(\PR^1, \PR^{n-1})) = \emptyset;$
			
			\item \label{bas} if $e \le d$, a $k$-point $[g]$ belongs to
			\[
			\sigma_M^{-1}\left(\Mor_d(\PR^1, \PR^n)\right) \cap \tau_M^{-1}(\Mor_{e}(\PR^1, \PR^{n-1}))
			\]
			if and only if 
			\[
			\deg(\sigma \circ g) = d \mathand m_p(\sigma \circ g) = d-e;
			\]
			
			\item \label{bat} if $d = e$, then this subscheme is irreducible of dimension $(n+1)(d+1)-1$.
		\end{enumerate} 
	\end{claimcounted}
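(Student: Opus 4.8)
The plan is to reduce all three parts to a single algebraic observation about the defining tuples. Throughout, abbreviate $M_{d,e} \defeq \sigma_M^{-1}(\Mor_d(\PR^1,\PR^n))\cap\tau_M^{-1}(\Mor_e(\PR^1,\PR^{n-1}))$. A $k$-point $[g]$ of $\sigma_M^{-1}(\Mor_d(\PR^1,\PR^n))$ corresponds to a pair of tuples $(F_0:\dotsb:F_n)$ and $(G_1:\dotsb:G_n)$ as in \eqref{dae} satisfying \eqref{daf}, with $\deg F_i = d$; moreover $[g]$ lies in $M_{d,e}$ precisely when the second tuple has degree $\deg G_j = e$. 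First I would set $H \defeq \gcd(F_1,\dotsc,F_n)$, so that $m_p(\sigma\circ g) = \deg H$ by definition, and write $F_i = H\tilde F_i$ with $\gcd(\tilde F_1,\dotsc,\tilde F_n) = 1$. Dividing the relations \eqref{daf} by $H$, which is legitimate since $k[u,v]$ is a domain, yields $\tilde F_i G_j = \tilde F_j G_i$ for all $1\le i,j\le n$, i.e.\ the vanishing of all $2\times 2$ minors of the matrix with rows $(\tilde F_i)$ and $(G_j)$.

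The key step is to conclude from this that the two coprime tuples $(\tilde F_1,\dotsc,\tilde F_n)$ and $(G_1,\dotsc,G_n)$ are proportional by a nonzero scalar. I would argue this over the fraction field $k(u,v)$: the vanishing of the minors forces the two rows to be proportional, say $\tilde F_i = (A/B)G_i$ with $\gcd(A,B)=1$; then $B\tilde F_i = AG_i$ for all $i$, and coprimality of the $\tilde F_i$ and of the $G_j$ forces both $A$ and $B$ to be constant. Comparing degrees in the resulting identity $\tilde F_i = \lambda G_i$ with $\lambda\in k^*$ gives the fundamental relation $e = \deg G_j = d - \deg H = d - m_p(\sigma\circ g)$.

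Parts \ref{bar} and \ref{bas} are then immediate. Since $\deg H\ge 0$, the relation gives $e\le d$, so $M_{d,e}=\emptyset$ whenever $e>d$, which is \ref{bar}. For \ref{bas} the identity $m_p(\sigma\circ g) = d-e$ is exactly the asserted condition; conversely, given a degree-$d$ morphism with $m_p(\sigma\circ g) = d-e$, the tuple $(G_j)$, being proportional to $(\tilde F_j)$, has degree $d-\deg H = e$, so $[g]$ indeed lies in $\tau_M^{-1}(\Mor_e(\PR^1,\PR^{n-1}))$, giving both implications.

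For part \ref{bat}, with $e=d$ the relation forces $m_p(\sigma\circ g)=0$, i.e.\ the image of $g$ avoids $p$, equivalently $g$ factors through the open subscheme $U\defeq\sigma^{-1}(\PR^n\smallsetminus\{p\})$. I would then invoke the open immersion $\Mor(\PR^1,U)\hookrightarrow\Mor(\PR^1,Y)$ from Proposition \ref{ban} and the isomorphism $\sigma_M|_{\Mor(\PR^1,U)}\colon\Mor(\PR^1,U)\xrightarrow{\sim}\Mor(\PR^1,\PR^n\smallsetminus\{p\})$ recorded just before Proposition \ref{dab} to identify $M_{d,d}$ with the degree-$d$ open subscheme $\Mor_d(\PR^1,\PR^n\smallsetminus\{p\})$ of $\Mor_d(\PR^1,\PR^n)$. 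Since the latter is irreducible, nonsingular of dimension $(n+1)(d+1)-1$, and deleting the (proper closed) locus of maps through $p$ leaves a nonempty open subscheme, irreducibility and the dimension count follow. The main obstacle is exactly this last identification at the level of schemes rather than sets: matching $M_{d,d}$ with $\Mor_d(\PR^1,U)$ requires observing that both are open subschemes of $\Mor(\PR^1,Y)$ with the same closed points, so that density of closed points in these finite-type $k$-schemes (Jacobson) upgrades the bijection on $k$-points to an equality of open subschemes.
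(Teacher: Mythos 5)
Your proof is correct and follows essentially the same route as the paper's: the paper likewise reduces everything to the factorization $F_i = H\cdot G_i$ forced by \eqref{daf} (arguing via a case split on whether $p\in(\sigma\circ g)(\PR^1)$ rather than your unified gcd/fraction-field computation, but with the same conclusion $e = d - m_p(\sigma\circ g)$, which yields parts \eqref{bar} and \eqref{bas}), and for part \eqref{bat} it identifies the subscheme with the nonempty open $\Mor_d(\PR^1,\PR^n\smallsetminus\{p\})\subset\Mor_d(\PR^1,\PR^n)$ via the isomorphism of $\sigma_M$ over $\PR^n\smallsetminus\{p\}$ exactly as you do. Your explicit Jacobson-density argument upgrading the bijection on $k$-points to an equality of open subschemes merely makes precise a step the paper leaves implicit.
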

	\begin{subproof}
		Consider a morphism $g: \PR^1 \rightarrow Y$ such that $\sigma \circ g$ has degree $d$. We have two situations: either $p =(1:0: \dotsb :0)$ belongs to the image $(\sigma \circ g)(\PR^1)$ or it does not. 
		
		Let $((F_0: \dotsb: F_n), (G_1:\dotsb: G_n))$ be the pair of tuples of polynomials corresponding to $g$ as described in \eqref{dae}. Since $\sigma \circ g$ is of degree $d$ we have that all the $F_i$ are of degree $d$. Moreover, condition \eqref{daf} is equivalent to saying there exists a homogeneous polynomial $H$ such that $F_i = H \cdot G_i$ for $1 \le i \le n$.
		
		Suppose that $p \notin (\sigma \circ g)(\PR^1)$. Then $F_1, \dotsc, F_n$ in \eqref{dae} have no factors in common. Since $G_1, \dotsc, G_n$ also do not have factors in common, we have that \eqref{daf} implies
		\[
		F_i = aG_i \mathfor 1 \le i \le n \mathand a \in k.
		\]
		In particular, $\deg G_i = \deg F_i = d$, or equivalently, $\tau \circ g$ has degree $d$. In other words,
		\[
		\tau_M([g]) \in \Mor_d(\PR^1, \PR^{n-1}) \mathand [g] \in \sigma_M^{-1}\left(\Mor_d(\PR^1, \PR^n)\right) \cap \tau_M^{-1}(\Mor_{d}(\PR^1, \PR^{n-1})).
		\]
		
		Conversely, suppose that $[g] \in \sigma_M^{-1}\left(\Mor_d(\PR^1, \PR^n)\right) \cap \tau_M^{-1}(\Mor_{d}(\PR^1, \PR^{n-1}))$. Then the polynomials $F_1, \dotsc, F_n$ and $G_1, \dotsc, G_n$ all have degree $d$. Since $F_i = H \cdot G_i$ for all $1 \le i \le n$, we have that $H$ is a constant. Since $G_1, \cdots, G_n$ have no factors in common, neither do $F_1, \dotsc, F_n$, and thus, they have no roots in common. We conclude that $p \notin (\sigma \circ g)(\PR^1)$.
		
		We have proven that $[g]$ belongs to
		\begin{equation}
		\label{bau}
		\sigma_M^{-1}\left(\Mor_d(\PR^1, \PR^n)\right) \cap \tau_M^{-1}(\Mor_{d}(\PR^1, \PR^{n-1}))
		\end{equation}
		if and only if $p \not\in (\sigma \circ g)(\PR^1)$ and $\sigma \circ g$ has degree $d$. Notice that the latter condition is equivalent to $[\sigma \circ g]$ belongs to
		\begin{equation}
		\label{bav}
	 	\Mor_d(\PR^1, \PR^n \smallsetminus \{p\}) \defeq \Mor_d(\PR^1, \PR^n) \cap \Mor(\PR^1, \PR^n \smallsetminus \{p\}).
		\end{equation}
		
		It is clear that $\sigma_M$ induces a bijection between \eqref{bau} and \eqref{bav}. To see it is in fact an isomorphism, recall that the morphism
		\[
		\sigma_M|_{\Mor(\PR^1, \sigma^{-1}(\PR^n \smallsetminus \{p\}))}: \Mor(\PR^1, \sigma^{-1}(\PR^n \smallsetminus \{p\})) \rightarrow \Mor(\PR^1, \PR^n \smallsetminus \{p\})
		\]
		is an isomorphism of open subsets of $\Mor(\PR^1, Y)$ and $\Mor(\PR^1, \PR^n)$. Further, notice that $\Mor_d(\PR^1, \PR^n \smallsetminus \{p\})$ is a non-empty open subscheme of $\Mor_d(\PR^1, \PR^n)$. Indeed, since $k$ is infinite we can always find morphisms $\PR^1 \rightarrow \PR^n$ of degree $d$ whose image avoids a point $p$. Therefore, $\Mor_d(\PR^1, \PR^n \smallsetminus \{p\})$ is irreducible and nonsingular of dimension
		\[
		\dim \Mor_d(\PR^1, \PR^n) = (n+1)(d+1)-1,
		\]
		which proves item \eqref{bat} of the claim.
		
		Now suppose
		\[
		p = (1: 0 : \dotsb : 0) \in (\sigma \circ g)(\PR^1).
		\]
		Since $d > 0$, we have that $p \in (\sigma \circ g)(\PR^1)$ if and only if the polynomials $\{F_i\}_{1 \le i \le n}$ have roots in common or equivalently they have common factors. Let $H$ be a form in $k[u,v]$ such that
		\[
		F_i = H \cdot F_i' \mathfor 1 \le i \le n
		\]
		(notice that $H \nmid F_0$) and such that $m\defeq \deg H$ is maximal. In such a situation we have equalities $F_iG_j = F_jG_i$ for all $1 \le i,j \le n$ if and only if
		\[
		G_i = aF'_i \mathfor 1 \le i \le n \mathand a \in k.
		\]
		In particular, all the polynomials $\{G_i\}_{1 \le i \le n}$ have degree $e\defeq d-m$, that is, $\tau \circ g$ has degree $e$ and, by definition, $m_p(\sigma \circ g) = m = d - e$. In other words,
		\begin{align*}
		\tau_M([g]) & \in \Mor_{d-m}(\PR^1, \PR^{n-1}) \mathand  \\
		[g] & \in \sigma_M^{-1}\left(\Mor_d(\PR^1, \PR^n)\right) \cap \tau_M^{-1}(\Mor_{d-m}(\PR^1, \PR^{n-1}))
		\end{align*}
		for some $1 \le m < d$, in particular $e < d$. This and the case $p \notin (\sigma \circ g)(\PR^1)$ prove item \eqref{bas} of the claim. And since we proved $e \le d$ we obtain \eqref{bar} of the claim.
	\end{subproof}
	
	Define
	\[
	M_{d, m} \defeq  \sigma_M^{-1}\left(\Mor_d(\PR^1, \PR^n)\right) \cap \tau_M^{-1}(\Mor_{d-m}(\PR^1, \PR^{n-1}))
	\]
	for all $d \ge 1$ and $0 \le m < d$. Notice that by Claim \ref{dabe}, $\deg (\tau \circ g) = d-m$ if and only if we have parametric multiplicity $m_{p}(\sigma \circ g) = m$. Thus, $M_{d,m}$ has $k$-points satisfying the conditions of the statement of the theorem.

	The partitions \eqref{ccao} and the one given by Claim \ref{dag} conclude the proof of items \eqref{dah}, \eqref{dai}, \eqref{daj} of the theorem for $r=1$.
	
	\vskip 0.2\baselineskip
	\noindent {\bf Case $r>1$:}
	\vskip 0.2\baselineskip
	We fix the following notation:
	\begin{itemize}
		\item $\sigma: Y = \Bl_{p_1, \dotsc, p_r}(\PR^n) \rightarrow \PR^n$ is the blow-up of $\PR^n$ along $\{p_1, \dotsc, p_r\}$ with exceptional divisor $E$;
		\item $\sigma_i: \Bl_{p_i}(\PR^n) \rightarrow \PR^n$ is the blow-up of $\PR^n$ at $p_i$ with exceptional divisor $E_i$;
		\item $\pr_i: \Bl_{p_1}(\PR^n) \times_{\PR^n} \dotsb  \times_{\PR^n} \Bl_{p_r}(\PR^n) \rightarrow \Bl_{p_i}(\PR^n)$ is the natural $i$-th projection;
		\item $\theta: \Bl_{p_1}(\PR^n) \times_{\PR^n} \dotsb  \times_{\PR^n} \Bl_{p_r}(\PR^n) \rightarrow \PR^n$ is the natural morphism to $\PR^n$;
		\item $\iota: Y \hookrightarrow \Bl_{p_1}(\PR^n) \times_{\PR^n} \dotsb \times_{\PR^n} \Bl_{p_r}(\PR^n)$ is the closed immersion defined in Lemma \ref{dam}.
		\item $N \defeq \Mor(\PR^1, \PR^n)$ and $N_{>0} \defeq \Mor_{>0}(\PR^1, \PR^n)$;
		\item The partition
		\[
		\Mor_{>0}(\PR^1, \Bl_{p_i}(\PR^n)) \cong \left( \coprod_{d_i \in \NN} \coprod_{m_i<d_i} M_{d_i,m_i}\right) \amalg \Mor_{>0}(\PR^1, E_i)
		\] induced by each $\sigma_i$ by the case $r = 1$.
		\item $f_i: \PR^1 \rightarrow \PR^n$ is the constant morphism such that $f_i(\PR^1) = p_i$.
	\end{itemize}
		
		Recall that we have a commutative diagram
		\[
		\begin{tikzcd}
		\Mor(\PR^1, Y) \ar[r, hook, "\iota_M"] \ar[dr, "\sigma_M"']& \Mor(\PR^1, \Bl_{p_1}(\PR^n) \times_{\PR^n} \dotsb  \times_{\PR^n} \Bl_{p_r}(\PR^n)) \ar[d, "\theta_M"]\\
		&
		N
		\end{tikzcd}
		\]
		and moreover, we have that $\Mor_{>0}(\PR^1, Y) \smallsetminus \Mor_{>0}(\PR^1, E)$ is mapped to $N_{>0}$ via $\sigma_M$ and
		\[
		\sigma_{i,M}^{-1}(N_{>0}) = \coprod_{d_i >0} \coprod_{m_i<d_i} M_{d_i,m_i}.
		\]
		Since coproducts commute with fibered products in the category of schemes and $\sigma_i \circ \pr_i = \theta$ for all $i$, we have
		\begin{equation*}
		\theta^{-1}_M(N_{>0}) =  \coprod_{d_1, \dotsc, d_r > 0}\coprod_{m_i < d_i } \pr_{1,M}^{-1}(M_{d_1,m_1}) \cap \dotsc \cap \pr_{r,M}^{-1}(M_{d_r,m_r}).
		\end{equation*}
		
		Now let $[g] \in \Mor(\PR^1, Y)$ be a $k$-point and suppose that
		\[
		\pr_{i,M} \circ \iota_M([g]) = [\pr_i \circ \iota \circ  g] \in M_{d_i, m_i}.
		\]
		Then, we have that
		\[
		\sigma_{i,M} \circ \pr_{i,M} \circ \iota_M([g]) = \theta_M \circ \iota_M([g])
		\]
		corresponds to a morphism
		\[
		\theta \circ \iota \circ g: \PR^1 \rightarrow \PR^n
		\]
		of degree $d_i$. We conclude that
		\[
		\iota_M^{-1}\left(\pr_{i,M}^{-1}\left(M_{d_i,m_i}\right) \cap \pr_{j,M}^{-1}\left(M_{d_j,m_j}\right)\right) = \emptyset
		\]
		whenever $d_i \neq d_j$. Thus, we obtain the partition
		\begin{equation*}
		\begin{split}
		\sigma_{M}^{-1}(N_{>0}) = \iota_M^{-1} \left(\theta_M^{-1}(N_{>0})\right) & = \\
		\coprod_{d \in \NN}\coprod_{m_i < d} & \iota_M^{-1}\left(\pr_{1,M}^{-1}\left(M_{d,m_1}\right)\right) \cap \dotsc \cap \iota_M^{-1}\left(\pr_{r,M}^{-1}\left(M_{d,m_r}\right)\right)
		\end{split}
		\end{equation*}
	
	For each positive integer $d$ and each collection $\mathbf{m} = (m_1, \dotsc, m_r)$ such that $m_i<d$, define
	\[
	M_{d, \mathbf{m}}\defeq  \iota_M^{-1}\left(\pr_{1,M}^{-1}\left(M_{d,m_1}\right)\right) \cap \dotsc \cap \iota_M^{-1}\left(\pr_{r,M}^{-1}\left(M_{d,m_r}\right)\right).
	\]
	Hence, for any point $k$-point $[g] \in M_{d, \mathbf{m}}$ and $1 \le i \le r$ we have
	\[
	\pr_{i,M}\circ \iota_M ([g]) = [\pr_i \circ \iota \circ g] \in M_{d,m_i}.
	\]
	On the other hand, by definition of $M_{d,m_i}$ in the case $r=1$, this happens if and only if
	\begin{equation*}
	\begin{split}
	\deg(\sigma_i \circ \pr_i \circ \iota \circ g) & = \deg(\sigma \circ g) = d \mathand \\
	m_{p_i}(\sigma_i \circ \pr_i \circ \iota \circ g) & = m_{p_i}(\sigma \circ g) = m_i.
	\end{split}
	\end{equation*}
	
	Recall that for each $i$ the component $M_{d,0}$ contained in $\Mor(\PR^1, \Bl_{p_i}\PR^n)$ is equal to $\sigma_{i,M}^{-1}\left(\Mor_d(\PR^1, \PR^n \smallsetminus \{p_i\})\right)$. If we denote $\mathbf{0} = (0,\dotsc, 0)$, then, since fibered products commute with fibered products and $\Mor(\PR^1, -)$ commutes with fibered products (and, in particular, intersections), it follows that $M_{d,\mathbf{0}}$ is equal to
	\begin{align*}
	\iota_M^{-1} & \left(\pr_{1,M}^{-1} \left(\sigma_{1,M}^{-1}\left(\Mor_d(\PR^1, \PR^n \smallsetminus \{p_1\})\right)\right)\right. \cap \dotsb  \\ 
	& \qquad \left. \cap \pr_{r,M}^{-1}\left(\sigma_{r,M}^{-1}\left(\Mor_d(\PR^1, \PR^n \smallsetminus \{p_r\})\right)\right)\right) \\
	= & \iota_M^{-1}\left( \theta_M^{-1}\left(\Mor_d(\PR^1, \PR^n \smallsetminus \{p_1\})\right) \cap \dotsb \cap \theta_M^{-1}\left(\Mor_d(\PR^1, \PR^n \smallsetminus \{p_r\})\right)\right) \\
	= & \sigma_M^{-1}\left(\Mor_d(\PR^1, \PR^n \smallsetminus \{p_1, \dotsc p_r\} )\right).
	\end{align*}
	
	Since $\sigma$ is an isomorphism when restricted to the preimage of $\PR^n \smallsetminus \{p_1, \dotsc, p_r\}$, we have $M_{d, \mathbf{0}}$ is isomorphic to an open subset of $\Mor_d(\PR^1, \PR^n)$ therefore it is nonsingular and irreducible of dimension $(n+1)(d+1)-1$. Here we use that $k$ is infinite to guarantee $\Mor_d(\PR^1, \PR^n \smallsetminus \{p_1, \dotsc, p_r\})$ is always non-empty.
	
	To finish the description of the partition of the statement of the theorem it suffices to describe non-constant morphisms $g:\PR^1 \rightarrow Y$ such that $\sigma \circ g$ is constant, this happens if and only if $\sigma \circ g = f_i$ for some $i$. We have the fibered diagram
	\[
	\begin{tikzcd}
	E \ar[r, hook] \ar[d, hook] & Y \ar[d, hook, "\iota"] \\
	\theta^{-1}(\{p_1, \dotsc, p_r\}) \ar[r, hook] \ar[d] & \Bl_{p_1}(\PR^n) \times_{\PR^n} \dotsb \times_{\PR^n} \Bl_{p_r}(\PR^n) \ar[d, "\theta"] \\
	\{p_1, \dotsc, p_r\} \ar[r, hook] & \PR^n
	\end{tikzcd}
	\]
	inducing the corresponding fibered diagram
	\[
	\begin{tikzcd}
	\Mor(\PR^1, E) \ar[r, hook] \ar[d, hook] & \Mor(\PR^1, \Bl_{p_1, \dotsc, p_r}(\PR^n)) \ar[d, hook, "\iota_M"] \\
	\theta^{-1}_M(\{[f_1], \dotsc, [f_r] \}) \ar[r, hook] \ar[d] &  \Mor(\PR^1, \Bl_{p_1}(\PR^n)) \times_{N} \dotsb  \times_{N} \Mor(\PR^1,\Bl_{p_r}(\PR^n)) \ar[d, "\theta_M"]\\
	\{[f_1], \dotsc, [f_r] \} \ar[r, hook] &  N.
	\end{tikzcd}
	\]
	Since $E_i \cong \sigma^{-1}(p_i)$, we have $E = \coprod_{i=1}^r \sigma^{-1}(p_i) \cong \coprod_{i=1}^r E_i$. Since fibered products commute with coproducts, it follows that
	\begin{align*}
	\Mor(\PR^1, E) & \cong \iota_M^{-1}\left(\coprod_{i=1}^r \theta^{-1}_M([f_i]) \right) \cong  \coprod_{i=1}^r \sigma^{-1}_M([f_i]) \cong \coprod_{i=1}^r \Mor(\PR^1, E_i) \\
	& \cong \coprod_{i=1}^r \coprod_{e \ge 0} \Mor_e(\PR^1, E_i)
	\end{align*}
	
	Therefore we obtain the partition of the statement, which finishes the proof of \eqref{dah},\eqref{dai} and \eqref{daj} of the theorem.
	
	To prove \eqref{dak} notice that for any positive $d$ we have
	\[
	\sigma^{-1}_M(\Mor_d(\PR^1,\PR^n)) \cong \coprod_{m_i < d} M_{d,\mathbf{m}}.
	\]
	By Proposition \ref{dab} we have that $\sigma_M|_{\sigma^{-1}_M(\Mor_d(\PR^1,\PR^n))}$ is a bijection on $k$-points, in particular, all of its fibers are irreducible of dimension $0$. Hence, by the theorem on dimension of fibers, there exists a unique component of $\sigma^{-1}_M(\Mor_d(\PR^1,\PR^n))$ dominating $M_{d,\mathbf{m}}$ and of dimension $(n+1)(d+1)-1$. We already know this component is $M_{d,\mathbf{0}}$, hence if $\mathbf{m} \neq \mathbf{0}$ every irreducible component of $M_{d,\mathbf{m}}$ has dimension strictly smaller than the dimension of $M_{d,\mathbf{0}}$.
\end{proof}

\section{Conflicts of interest}

The author states that there is no conflict of interest.

\printbibliography

\end{document}